\documentclass[10pt]{amsart}

\usepackage{graphicx}
\usepackage{amsfonts,amsmath,latexsym,amssymb,amsthm,enumerate}
\usepackage{hyperref}

\textheight=22cm
\textwidth=13.5cm
\hoffset=-1cm
\parindent=16pt

\newcounter{theoremcounter}

\newcounter{dummycounter}

\newcounter{emptycounter}

\newtheorem{theorem}[theoremcounter]{Theorem}

\newtheorem{lemma}[theoremcounter]{Lemma}

\newtheorem{proposition}[theoremcounter]{Proposition}
\newtheorem{corollary}[theoremcounter]{Corollary}

\newtheorem{definition}[theoremcounter]{Definition}

\numberwithin{equation}{section}
\numberwithin{lemmacounter}{section}
\numberwithin{propcounter}{section}
\numberwithin{corcounter}{section}
\numberwithin{conjcounter}{section}
\numberwithin{theoremcounter}{section}
\numberwithin{probcounter}{section}

\newcounter{eqncounter}

\numberwithin{equation}{eqncounter}

%
%

\def\IR{\mathbb R}

\def\IZ{\mathbb Z}
\def\IN{\mathbb N}

\def\IP{\mathbb P}
\def\IQ{\mathbb Q}

\def\P{\mathcal{P}}

\def\diam{\text{diam}}

\def\H{{\mathcal{H}}}
\def\Leb{{\mathcal{L}}}
\def\L{B}
\def\M{E}
\def\p{p}
\def\a{a}

\def\Vol{\textup{Vol}}
\def\cl{\textup{cl}}
\def\bd{\textup{bd}}
\def\dim{\textup{dim }}

\def\inte{\textup{int}}
\def\D{{D}}

\def\pit{\widetilde{\pi}}

\newcommand{\N}{\mathbb{N}}
\newcommand{\R}{\mathbb{R}}
\newcommand{\Z}{\mathbb{Z}}
\newcommand{\Q}{\mathbb{Q}}

\newcommand{\Ppsi}{\Psi}

\newcommand{\tx}[1]{\text{#1}}

\newcommand{\de}{\mbox{d}}

\begin{document}\baselineskip=17pt

\author{Fabrizio Barroero}
\address{Institute of Analysis and Computational Number Theory (Math A),
Graz University of Technology,
Steyrergasse 30, A-8010 Graz,
Austria}
\email{barroero@math.tugraz.at}
\thanks{F. Barroero is supported by the Austrian Science Foundation (FWF) project W1230-N13.}

\author{Martin Widmer}
\address{Scuola Normale Superiore di Pisa,
56126 Pisa, Italy}
\email{martin.widmer@sns.it}
\thanks{M. Widmer was supported in part by the Austrian Science Foundation (FWF) project M1222-N13 and ERC-Grant No. 267273.}

\title{Counting lattice points and o-minimal structures}

\date{\today}

\subjclass[2010]{Primary 11H06, 03C98, 03C64; Secondary 11P21, 28A75, 52C07}

\keywords{Lattice points, counting, o-minimal structure, volumes of projections, computational geometry}

\begin{abstract}
Let $\Lambda$ be a lattice in $\R^n$, and let $Z\subseteq \R^{m+n}$ be a definable family in an o-minimal structure over $\R$.
We give sharp estimates for the number of lattice points in the fibers $Z_T=\{x\in \R^n: (T,x)\in Z\}$.
Along the way we show that for any subspace $\Sigma\subseteq\R^n$ of dimension $j>0$ the $j$-volume of the orthogonal projection of $Z_T$ to
$\Sigma$ is, up to a constant depending only on the family $Z$, bounded by the maximal $j$-dimensional volume of the orthogonal projections to the $j$-dimensional coordinate subspaces. 
\end{abstract}

\maketitle

\section{Introduction}\label{intro}
Let $\Lambda$ be a lattice in $\IR^n$, and let $Z$ be a subset of $\R^{m+n}$. We consider $Z$ as a parameterized family of subsets 
$Z_T=\{x\subseteq \IR^n: (T,x)\in Z\}$ of $\IR^n$.
One is often led to the problem of estimating the cardinality $|\Lambda\cap Z_T|$ as the parameter $T$
ranges over an infinite set. According to a general principle one would expect that,
if the sets $Z_T$ are reasonably shaped, a good estimate for $|\Lambda\cap Z_T|$ is given by
$\Vol(Z_T)/\det\Lambda$. The situation is relatively easy if
$Z_T=T Z_1$ for some fixed subset $Z_1$ of $\IR^n$ and as $T\in \IR$ tends to infinity.\footnote{However, even if $Z_T=TZ_1$ is compact
it is not necessarily true that $|\Lambda\cap Z_T|=\Vol(Z_1)T^n/\det\Lambda+O(T^{n-1})$, e.g., 
take $\Lambda=\IZ^n$, and $Z_1=\{0,2^{-1},2^{-2},2^{-3},\ldots\}\times [0,1]^{n-1}$.
The latter is a counterexample to the claim in the first paragraph of \cite{15}.} 
However,
in many situations the family $Z$ is more complicated, and typically described by inequalities such as
\begin{alignat}1\label{Zfunc}
f_1(T_1,\ldots,T_m,x_1,\ldots,x_n)\leq 0,\ldots, f_N(T_1,\ldots,T_m,x_1,\ldots,x_n)\leq 0,
\end{alignat}
where the $f_i$ are certain real valued functions on $\IR^{m+n}$, e.g., polynomials.
Using the language of o-minimal structures from model theory we prove for fairly general
families $Z$ an estimate for $|\Lambda\cap Z_T|$, which is quite precise in terms
of the geometry of the sets $Z_T$, and the geometry of the lattice $\Lambda$.

A classical result, although restricted to $\Lambda=\IZ^n$, was proven by Davenport \cite[Theorem]{15}. 
\begin{theorem}[Davenport] \label{thmDav}
Let $n$ be a positive integer, and let $Z_T$ be a compact set in $\IR^{n}$ that satisfies the following conditions.
\begin{enumerate}
\item Any line parallel to one of the $n$ coordinate axes intersects $Z_T$ in a set of points, which, if not empty, consists of at most $h$ intervals.
\item The same is true (with $j$ in place of $n$) for any of the $j$ dimensional regions obtained by orthogonally projecting $Z_T$ on one of the 
coordinate spaces defined by equating a selection of $n-j$ of the coordinates to zero, and this condition is satisfied for all $j$ from 1 to $n-1$.
\end{enumerate}
Then
\begin{alignat*}3
\left| \left|Z_T\cap \IZ^n \right|-\Vol(Z_T) \right| \leq \sum_{j=0}^{n-1} h^{n-j} V_j(Z_T),
\end{alignat*}
where $V_j(Z_T)$ is the sum of the $j$-dimensional volumes of the orthogonal projections of $Z_T$ on 
the various coordinate spaces obtained by equating any $n-j$ coordinates to zero, and $V_0(Z_T)=1$ by convention.
\end{theorem}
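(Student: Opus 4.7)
The plan is induction on $n$. The base case $n = 1$ follows immediately from condition~(1): $Z_T$ is a union of at most $h$ intervals, each contributing a count/length error of at most $1$, for a total of $h = h\,V_0$.

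For the inductive step, I will slice by the integer hyperplanes $x_n = k$. Writing
\[
|Z_T \cap \Z^n| = \sum_{k \in \Z} |Z_T(k) \cap \Z^{n-1}|, \qquad \Vol(Z_T) = \int_\R g(y)\,dy, \quad g(y) := \Vol_{n-1}(Z_T(y)),
\]
each slice $Z_T(k)$ inherits conditions~(1) and~(2), so the inductive hypothesis bounds the integer/volume discrepancy of each slice by $\sum_{j=0}^{n-2} h^{n-1-j} V_j(Z_T(k))$. Via the triangle inequality, the total error is then controlled by the aggregated slice error $\sum_k \sum_{j=0}^{n-2} h^{n-1-j} V_j(Z_T(k))$ together with the Riemann-vs-integral discrepancy $\bigl|\sum_k g(k) - \int g\,dy\bigr|$.

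Both pieces will be handled by the same Fubini device. For each $j$-dimensional coordinate subspace $W \subseteq \R^{n-1}$, the projection $\pi_W(Z_T(k))$ equals the slice at $x_n = k$ of the $(j+1)$-dimensional projection $\pi_{W \oplus \R e_n}(Z_T)$. Exchanging the sum over $k$ with the integral over $W$, every fiber is a line parallel to the $x_n$-axis meeting the projection in at most $h$ intervals by condition~(2), and the elementary 1D estimate $\bigl||I \cap \Z|-|I|\bigr|\le h$ applied fiberwise yields
\[
\sum_{k \in \Z}\Vol_j\bigl(\pi_W(Z_T(k))\bigr) \le \Vol_{j+1}\bigl(\pi_{W\oplus\R e_n}(Z_T)\bigr)+h\,\Vol_j(\pi_W(Z_T)).
\]
The same computation with $W = \R^{n-1}$ (so $j=n-1$) gives the Riemann-sum bound $\bigl|\sum_k g(k)-\int g\,dy\bigr|\le h\,\Vol_{n-1}(\pi_{e_n^\perp}(Z_T))$.

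The remainder is bookkeeping. Split $V_j(Z_T)=V_j^{\mathrm{I}}+V_j^{\mathrm{II}}$ according to whether the coordinate subspace contains $e_n$ or not; the key lemma then reads $\sum_k V_j(Z_T(k))\le V_{j+1}^{\mathrm{I}}+h\,V_j^{\mathrm{II}}$. So the aggregated slice error contributes $\sum_{j=0}^{n-2}h^{n-1-j}(V_{j+1}^{\mathrm{I}}+h V_j^{\mathrm{II}})$ and the Riemann error contributes $h\,V_{n-1}^{\mathrm{II}}$. Reindexing $i=j+1$ in the first sum and absorbing the Riemann term into the $V^{\mathrm{II}}$-sum reassembles everything into $\sum_{i=1}^{n-1}h^{n-i}V_i^{\mathrm{I}}+\sum_{j=0}^{n-1}h^{n-j}V_j^{\mathrm{II}}=\sum_{j=0}^{n-1}h^{n-j}V_j(Z_T)$, using $V_0^{\mathrm{I}}=0$. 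I expect the main obstacle to be precisely this bookkeeping---keeping track of which projections contain the slicing direction $e_n$ so that the doubled cross-terms from the two error sources assemble \emph{exactly} into the target bound.
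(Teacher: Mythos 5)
Your proof is correct, and it is essentially Davenport's original argument from the cited 1951 paper \cite{15}; the present paper states Theorem~\ref{thmDav} without proof and simply refers to Davenport. Your slicing induction, the Fubini identity $\pi_W(Z_T(k)) = \bigl(\pi_{W\oplus\R e_n}(Z_T)\bigr)(k)$, the fiberwise one-dimensional estimate via condition~(2), and the $V^{\mathrm{I}}/V^{\mathrm{II}}$ bookkeeping (with $V_0^{\mathrm{I}}=0$ closing the telescope) all check out.
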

A drawback of Davenport's theorem is that the conditions (1) and (2) are often difficult to verify. 
Various authors have given similar estimates for general lattices with simpler, possibly milder, conditions on the set; see \cite{WidmerLNCL} 
for a discussion on that. 
Classical results are known for homogeneously expanding sets whose boundary is parameterizable by certain Lipschitz maps, see, e.g., \cite[Theorem 5.1, Chap. 3]{3},
or \cite[Theorem]{Spain} for a refined version.  
Masser and Vaaler \cite[Lemma 2]{1} gave a counting result for sets satisfying the above Lipschitz condition but which are not necessarily homogeneously expanding,
and moreover, the dependence on the lattice was made explicit.
Masser and Vaaler's result was refined by the second author \cite[Theorem 5.4]{art1} to get a sharp error term  (for balls such sharp estimates have been
obtained by Schmidt in \cite[Lemma 2]{36}).
However, all these results for general lattices have one drawback in common:
usually, a direct application yields nontrivial estimates
only if the volume is much larger than the diameter; e.g., if $T\in \R$ tends to infinity we usually require 
$\diam(Z_T)^{n-1}=o(\Vol(Z_T))$.  We shall illustrate this problem more
explicitly after we have stated our theorem.

Of course, Davenport's theorem can easily be generalized to arbitrary lattices. With a bit care, using standard results from Geometry of Numbers, 
one gets the error term (ignoring a factor depending only on $n$)
\begin{alignat}1\label{Thunderbound}
\sum_{j=0}^{n-1}h'(Z_T)^{n-j}\frac{V_j'(Z_T)}{\lambda_1\cdots \lambda_j}, 
\end{alignat}
where $\lambda_1,\ldots,\lambda_n$ are the successive minima of $\Lambda$ (with respect to the zero-centered unit ball), 
$V_j'(Z_T)$ is the supremum of the volumes of the orthogonal 
projections of $Z_T$ to the $j$-dimensional linear subspaces, and $h'$ is what we get instead of $h$ when
in Davenport's conditions {\it ``line parallel to one of the $n$ coordinate axes''} and 
{\it ``orthogonally projecting $Z_T$ on one of the 
coordinate spaces defined by equating a selection of $n-j$ of the coordinates to zero''}
are replaced by {\it ``line''} and {\it ``any projection of $Z_T$ on any $j$-dimensional subspace''}.
 
Now the quantity $V_j'(Z_T)$ is definitely not so nice to work with as $V_j(Z_T)$.
Moreover, proving the existence of uniform upper bounds for $h'(Z_T)$ (i.e., independent of $T$) is often troublesome and awkward. 
Therefore it would be nice to have some general but mild conditions on the family $Z$ that allow us to replace $h'(Z_T)$ by a uniform constant $c_Z$
and $V_j'(Z_T)$ by $V_j(Z_T)$. 

At this point it might be worthwhile to emphasize that even if the sets $Z_T$ are simply given by a finite number of squares in $\R^2$ we cannot
expect that $V_j'(Z_T)\leq c V_j(Z_T)$ for some absolute constant $c$; consider the sets $C_n\times C_n$ in \cite[Example 2.67]{AmbrosioFuscoPallara} for a simple
counterexample. The latter example indicates that such an inequality would require a rather strong hypothesis on the family $Z$.
Also, to handle $h'$ we need that the number of connected components of a projection of $Z_T$ when intersected with a line is uniformly bounded.

The setting of o-minimal structures delivers exactly the required topological properties, and therefore seems to be the natural framework suitable
for our problem. Furthermore, it provides a rich and flexible structure, including many of the relevant examples. 

We are using the notation of \cite{vandenDries} and \cite{15}. We write $\IN=\{1,2,3,\ldots\}$ for the set of positive integers.

\begin{definition}\label{defomin}
An o-minimal structure is a sequence $\mathcal{S}=(\mathcal{S}_n)_{n\in \IN}$ of families of subsets in $\R^n$ such that for each $n$:
\begin{enumerate}
\item $\mathcal{S}_n$ is a boolean algebra of subsets of $\IR^n$, that is, $\mathcal{S}_n$ is a collection of subsets of $\IR^n$,
$\emptyset \in \mathcal{S}_n$, and if $A, B \in \mathcal{S}_n$ then also $A\cup B \in \mathcal{S}_n$, and $\IR^n\backslash A\in \mathcal{S}_n$. 
\item If $A \in \mathcal{S}_n$ then  $\IR\times A \in \mathcal{S}_{n+1}$ and $A\times \IR \in \mathcal{S}_{n+1}$.
\item $\{(x_1,\ldots,x_n): x_i=x_j\}\in \mathcal{S}_n$ for $1\leq i<j\leq n$.
\item If $\pi: \IR^{n+1}\rightarrow \IR^n$ is the projection map on the first $n$ coordinates and $A \in \mathcal{S}_{n+1}$ then $\pi(A) \in \mathcal{S}_n$.
\item $\{r\}\in \mathcal{S}_1$ for any $r\in \IR$ and $\{(x,y)\in \IR^2: x<y\}\in \mathcal{S}_2$.
\item The only sets in $\mathcal{S}_1$ are the finite unions of intervals and points. (``Interval'' always means ``open interval'' with infinite endpoints allowed.) 
\end{enumerate}
\end{definition} 
Following the usual convention, we say a set $A$ is definable (in $\mathcal{S}$) if it lies in some $\mathcal{S}_n$.\\

Next we give some important examples of o-minimal structures, following the presentation of Scanlon in \cite{Scanlon1037}. 
For each $n\in \IN$ let $F_n$ be a collection of functions $f:\R^n\rightarrow \R$ that we call distinguished functions.
If $g,h:\R^n\rightarrow \R$ are built from the coordinate functions, constant functions and distinguished functions by composition 
(provided it is defined), then we say
\begin{alignat*}1
\{x\in \R^n: g(x)<h(x)\},\\
\{x\in \R^n: g(x)=h(x)\},
\end{alignat*}
are atomic sets. Now let us consider the smallest family of sets in $\R^n$ (for various $n$) that contains all atomic sets, and is closed under finite unions and complements, 
and images of the usual projection maps $\pi:\R^{n+1}\rightarrow \R^n$ onto the first $n$ coordinates.
For the following choices of  $F=\bigcup_n F_n$, the resulting family consists precisely of the definable 
sets in a particular o-minimal structure: 
\begin{enumerate}
\item $F_{\tx{alg}}=\{$polynomials defined over $\R\}$,
\item $F_{\tx{an}}=F_{\tx{alg}} \cup \{$restricted analytic functions$\}$,
\item $F_{\tx{exp}}=F_{\tx{alg}} \cup\{$the exponential function $\exp:\R\rightarrow \R\}$,
\item $F_{\tx{an,exp}}=F_{\tx{an}} \cup F_{\tx{exp}}$.
\end{enumerate}
By a restricted analytic function we mean a function $f:\R^n\rightarrow \R$, which is zero outside of $[-1,1]^n$, and is the restriction to $[-1,1]^n$ of a 
function, which is real analytic on an open neighborhood of  $[-1,1]^n$.

For the first example note that by the Tarski-Seidenberg theorem every 
set in this family is a boolean combination of atomic sets, and thus is semialgebraic. 
This implies (6) in Definition \ref{defomin}, and (1)-(5) are clear. The o-minimality of example (2) is due to Denef and van den 
Dries \cite{DenefvandenDries}, while (3) is due to Wilkie \cite{Wilkie96}. Van den Dries and Miller \cite{vandenDriesMiller} proved the o-minimality of the fourth example.

From now on, and for the rest of the paper, we suppose that our o-minimal structure $\mathcal{S}$ contains the semialgebraic sets. 
Recall that a set $A$ is definable if it lies in some $\mathcal{S}_n$. 
For a set $Z\subseteq \IR^{m+n}$ we call $Z_T=\{x\in \IR^n: (T,x)\in Z\}$ a fiber of $Z$.
From this viewpoint it is natural to call $Z$ a family. In particular, we call $Z$ a definable family if $Z$ is a definable set.
We write 
$\lambda_i=\lambda_i(\Lambda)$ for $i=1,\ldots,n$ for the successive minima of $\Lambda$ with respect to the zero-centered unit ball $B_0(1)$, i.e., for $i=1,...,n$
\begin{alignat*}1
\lambda_i=\inf\{\lambda:B_0(\lambda)\cap\Lambda \text{ contains $i$ linearly independent vectors}\}.
\end{alignat*}
Also recall that 
$V_j(Z_T)$ is the sum of the $j$-dimensional volumes of the orthogonal projections of $Z_T$ 
on every $j$-dimensional coordinate subspace of $\R^n$. We shall see that if $Z$ is a definable family with bounded fibers $Z_T$ then the $j$-dimensional volumes of 
the orthogonal projections of $Z_T$ on any $j$-dimensional coordinate subspace of $\R^n$ exist and are finite, and also the volume $\Vol(Z_T)$ exists and is finite.
\begin{theorem}\label{maintheorem}
Let $m$ and $n$ be positive integers, let $Z\subseteq \IR^{m+n}$ be a definable family, and suppose the  fibers $Z_T$ are bounded. 
Then there exists a constant $c_Z \in \R$, depending only on the family $Z$, such that
\begin{alignat*}1
\left||Z_T\cap \Lambda|-\frac{\Vol(Z_T)}{\det \Lambda}\right|\leq c_{Z}\sum_{j=0}^{n-1}\frac{V_j(Z_T)}{\lambda_1\cdots \lambda_j},
\end{alignat*}
where for $j=0$ the term in the sum is to be understood as $1$.
\end{theorem}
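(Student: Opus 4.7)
The plan is to combine the general lattice-point bound \eqref{Thunderbound} sketched in the introduction with two inputs supplied by the o-minimal hypothesis: a uniform upper bound on $h'(Z_T)$ and a projection inequality bounding $V_j'(Z_T)$ by $V_j(Z_T)$ up to a constant depending only on $Z$.

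To make \eqref{Thunderbound} rigorous I would proceed via Minkowski reduction. Choosing a reduced basis $v_1,\ldots,v_n$ of $\Lambda$ with $|v_i|\ll_n\lambda_i$ and letting $A$ be the matrix with columns $v_i$, the identity $|Z_T\cap\Lambda|=|A^{-1}Z_T\cap\Z^n|$ together with $\Vol(A^{-1}Z_T)=\Vol(Z_T)/\det\Lambda$ reduces counting in $\Lambda$ to counting in $\Z^n$. Applying Davenport's theorem (Theorem~\ref{thmDav}) to $A^{-1}Z_T$ and pulling coordinate projections and coordinate-axis sections back through $A$ produces \eqref{Thunderbound}, where $h'(Z_T)$ is the maximum number of intervals cut out on any affine line by $Z_T$ and $V_j'(Z_T)$ is the supremum of $j$-volumes of projections of $Z_T$ onto $j$-dimensional subspaces.

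For the uniform control of $h'(Z_T)$, I would observe that the set
\[
W=\bigl\{(T,\ell,x)\in\R^m\times\R^{2n}\times\R^n : x\in Z_T\cap\ell\bigr\},
\]
with $\ell$ encoding an affine line in $\R^n$, is a definable family because our o-minimal structure contains the semialgebraic sets. The uniform finiteness theorem for definable families then bounds the number of connected components of the fibres of $W$ by a constant depending only on $Z$, which is precisely an upper bound on $h'(Z_T)$.

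The crux of the proof is the projection inequality
\[
\Vol_j(\pi_\Sigma(Z_T))\leq c_Z\max_{|I|=j}\Vol_j(\pi_I(Z_T))
\]
uniformly in $T$ and in the $j$-dimensional subspace $\Sigma$. My approach would be: apply definable cell decomposition to realise $Z_T$ as a union of boundedly many definable $C^1$-cells, each the graph of a definable map over a coordinate subspace; on each cell use the Cauchy--Binet formula to express $\Vol_j(\pi_\Sigma(\text{cell}))$ as an integral of $j\times j$ minors of the Jacobian, each minor being dominated by an integrand that computes a coordinate projection; and finally view the ratio between the two sides as a definable function of $(T,\Sigma)$ over the parameter space and deduce its boundedness from the uniform finiteness theorem applied to the associated definable family of super-level sets. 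This is the main obstacle, since it is the one place where o-minimality is genuinely used to convert topological finiteness into a quantitative geometric bound, and the Cantor-type counterexample $C_n\times C_n$ mentioned in the introduction shows that no such inequality can hold without a strong hypothesis on $Z$. Substituting the uniform bounds for $h'(Z_T)$ and $V_j'(Z_T)$ into \eqref{Thunderbound} then yields the theorem.
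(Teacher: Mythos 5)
Your overall skeleton matches the paper: reduce from $\Lambda$ to $\Z^n$ via a Minkowski-reduced basis and Davenport's theorem (yielding the error term \eqref{Thunderbound}), obtain a uniform Davenport constant by applying uniform finiteness of connected components to a definable family recording line intersections, and then prove a projection inequality $V_j'(Z_T)\le c_Z V_j(Z_T)$. The first two steps are essentially the paper's Sections~\ref{geomnumbers} and~\ref{Davenportconstant}. The gap is in the third step, which is exactly where the paper has to do its hardest work.

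Your argument for the projection inequality does not go through as stated. First, $Z_T$ is generically $n$-dimensional, so ``realising $Z_T$ as a union of cells, each a graph over a coordinate subspace'' and applying Cauchy--Binet to the Jacobian of $\pi_\Sigma$ does not directly compute $\Vol_j(\pi_\Sigma(Z_T))$; one needs a $j$-dimensional definable subset of $Z_T$ that still surjects onto $\pi_\Sigma(Z_T)$, chosen uniformly and definably in $(T,\Sigma)$, and one must account for the multiplicity of the coordinate projections restricted to that subset. Second, and more seriously, the conclusion that ``the ratio between the two sides is a definable function of $(T,\Sigma)$, hence bounded by uniform finiteness of super-level sets'' is false: uniform finiteness bounds the number of connected components of fibers of a definable family, not the values of a definable function, and a definable function on the non-compact parameter space $\R^m\times G(j,n)$ has no reason to be bounded (e.g.\ $f(T)=T$). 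The paper's actual route (Lemma~\ref{lemfam} and Lemma~\ref{lem1}) is different: it uses definable Skolem functions to construct a definable family $Z'$ of $j$-dimensional sets inside the rotated fibers $\varphi(Z_T)$ with $V_j'(Z_T)\le\sup_{\varphi\in O_n(\R)}\H^j(Z'_{(\varphi,T)})$, then invokes Federer's integral-geometric inequality (Theorem~\ref{thmfederer}) to bound $\H^j$ by the sum over coordinate projections of integrals of the multiplicity function $N(\pi_I\mid\cdot,y)$, and it is this multiplicity that uniform finiteness (Corollary~\ref{corollconncomp}) bounds uniformly in all parameters. You would need to replace your ``boundedness of a definable ratio'' step by this kind of multiplicity-counting argument. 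Finally, Davenport's theorem needs compact fibers; the paper handles non-closed $Z_T$ in Section~\ref{Proof of Theorem} by passing to $\cl(Z_T)$ and $\bd(Z_T)$ and using $\Vol(\bd(Z_T))=0$; your proposal is silent on this.
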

Up to the constant $c_Z$, our estimate is best-possible.  To see this we take $\Lambda=\lambda_1e_1\IZ+\cdots+\lambda_ne_n\IZ$ with $0<\lambda_1 \leq \cdots \leq \lambda_n$, 
and the semialgebraic set $Z$, defined as the union of
$Z^{(j)}=\{(T,x)\in \R^{1+n}:T\geq 0, x\in ([0,T]^{j}\times\{0\}^{n-j}+\lambda_je_j)\}$
taken over $j=1,\ldots,n-1>0$. Hence, for $T\geq 0$ we get 
\begin{alignat*}1
\left||Z_T\cap \Lambda|-\frac{\Vol(Z_T)}{\det \Lambda}\right|=\sum_{j=1}^{n-1}\prod_{p=1}^{j}\left(\left[\frac{T}{\lambda_p}\right]+1\right)
\geq 2^{-n}\sum_{j=0}^{n-1}\frac{V_j(Z_T)}{\lambda_1\cdots\lambda_j}.
\end{alignat*}
Next let us consider a simple application. Suppose we want to count lattice points in the fibers $Z_T$
of the family $Z$ as defined in (\ref{Zfunc})
by the $2^{n}$ polynomial functions $f_I(T,x)=\prod_I x_i^2 -T^2$, where $I$ runs over all subsets of $\{1,2,\ldots,n\}$, $n\geq 2$.
This problem occurs if one counts algebraic integers in a totally real field $k$, and of bounded Weil height.
Now we have $\Vol(Z_T)=2^{n}T(\log T)^{n-1}+O(T(\log T)^{n-2})$,
and moreover, $V_j(Z_T)=O(T(\log T)^{n-2})$. Obviously, our family $Z$ is a semialgebraic set.
Applying Theorem \ref{maintheorem} we get an asymptotic formula. 

Now suppose we want to derive a similar statement from the counting results in \cite{1} or \cite{art1} (\cite{3} cannot be applied as $Z_T$ is not homogeneously expanding). 
Then we require to parameterize  the boundary of $Z_T$ 
by a finite number of Lipschitz maps
$\phi:[0,1]^{n-1}\rightarrow \R^{n}$. This can certainly be done, even with a single map. But the diameter of $Z_T$ has size of order $T$, and thus
the Lipschitz constant $L$ of this map is necessarily of this size. This gives an error term of order $T^{n-1}$ which exceeds the ``main term'', at least if $n>2$. 
Possibly one can resolve this problem by using many parameterizing maps instead of just one. But even in this single case it is 
not obvious how to do this. 

Now the aforementioned example of counting integers in $k$ of bounded height is covered by more general and precise results in \cite{Widmerintpts}.
But in a subsequent paper \cite{Barroeroint} the first author will apply Theorem \ref{maintheorem} to deduce the asymptotics of algebraic integers of bounded height and of 
fixed degree over a given number field $k$. The special case $k=\Q$ follows from a result of Chern and Vaaler \cite{43} but the general result appears to be new. 

In an ongoing project we give a more elaborate application of Theorem \ref{maintheorem}, which, in conjunction with previous results of the second author, 
might lead to some new instances of Manin's conjecture
on the number of $k$-rational points of bounded height on the symmetric square of $\IP^n$, where $k$ is an arbitrary number field.
The special case $k=\IQ$ follows easily from a theorem of Schmidt \cite[Theorem 4a]{14}, which in turn follows from his results on the number of quadratic points 
of bounded height \cite[Theorem 3a]{14} and Davenport's theorem.\\

In recent times o-minimal structures have successfully been used for problems in number theory. Using ideas that date back to a paper by Bombieri and Pila \cite{BombieriPila}, 
and were further developed in various articles of Pila, 
Pila and Wilkie \cite{Pila} gave upper bounds for the number of rational points 
of bounded height on the transcendental part of definable sets. These results in turn have been applied to 
problems in Diophantine geometry (see \cite{PilaZannier}, \cite{Pila11}, \cite{MasserZannier08}, \cite{MasserZannier10} and \cite{HabeggerPila}). 
However, to the best of the authors' knowledge, o-minimal structures have not been
used so far to establish asymptotic counting results.\\

The paper is organized as follows. In Section \ref{geomnumbers} we use Geometry of Numbers, and follow arguments of Thunder \cite{34} to generalize Davenport's theorem
to arbitrary lattices with an error term as in (\ref{Thunderbound}). In Section \ref{o-minimalstructures} we collect some basic facts about o-minimal structures,
as well as some deeper results like the cell-decomposition Theorem, the Reparametrization Lemma (originally due to Yomdin \cite{Yomdin1}, \cite{Yomdin2}, and Gromov \cite[p.232]{Gromov},
and refined by Pila and Wilkie \cite{Pila}), and the existence of definable Skolem functions. 
Then, in Section \ref{Davenportconstant}, we use the fact that
there are uniform upper bounds for the number of connected components of fibers of definable sets, to establish a uniform upper bound for our quantity $h'$.
In Section \ref{sect_geomineq} we establish a geometric inequality that allows us to substitute $V_j'(Z_T)$ of (\ref{Thunderbound}) with $V_j(Z_T)$. 

This is the core argument of the paper, and the strategy is, roughly speaking, as follows. 
For each $1\leq j\leq n-1$ and any $j$-dimensional subspace
$\Sigma$ we construct a $j$-dimensional definable subset of $Z_T$ that
projects to $\Sigma$ with maximal volume. Locally, the volume of the
projection onto $\Sigma$ can be bounded by the sum of the volumes of the
projections onto the $j$-dimensional
coordinate spaces, so globally we only have to worry about these
projections being non-injective. However, o-minimality provides a bound
for the number of pre-images for each such projection, which is uniform
in $T$ and $\Sigma$, and this is sufficient.

To carry out the aforementioned strategy we require some concepts and results from geometric measure theory such as rectifiability and Hausdorff measure/dimension, 
which we derive and recall 
in Section \ref{sect_measuretheoretic_prelim}. The Reparametrization Lemma implies the required rectifiability assumptions for bounded definable sets. Finally, 
in Section \ref{Proof of Theorem} we put all together to prove Theorem \ref{maintheorem}.


Some of the potential users of our theorem may not be familiar with o-minimality. Therefore, we have given definitions, and proofs or references, 
even for the most basic concepts, and results. 
For the same reason we also have restricted ourselves to the set-theoretic language instead of the model-theoretic approach, 
although the latter often leads to simpler and quicker proofs. 

\section{Geometry of numbers}\label{geomnumbers}
 
By \cite[Lemma 8 p.135]{18} there exists a basis ${v}_1, \ldots , {v}_n$ of the lattice $\Lambda$ such that $|{v}_i|\leq i \lambda_i$ for $i=1, \ldots , n$. 
We let $\Ppsi$ be the automorphism of $\R^n$ defined by $\Ppsi({v}_i)={e}_i$, where ${e}_1=(1,0,\ldots,0),\ldots, {e}_n=(0,\ldots,0,1)$ is the
standard basis of $\IR^n$. 
Hence, we have $\Ppsi(\Lambda)=\IZ^n$.
\begin{lemma}\label{applyDavenport}
Let $\D\subseteq \R^n$ be a compact set such that $\Ppsi(\D)$ satisfies the hypothesis (1) and (2) of Theorem \ref{thmDav}. Then
\begin{alignat*}1
\left||\D\cap \Lambda|-\frac{\Vol(\D)}{\det \Lambda}\right|\leq \sum_{j=0}^{n-1}h^{n-j}V_j(\Psi(\D)),
\end{alignat*}
\end{lemma}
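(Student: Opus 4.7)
The plan is to reduce the statement to Davenport's original Theorem \ref{thmDav} via the linear automorphism $\Psi$, which transports the lattice $\Lambda$ to the standard integer lattice $\Z^n$ and the set $D$ to $\Psi(D)$.

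First I would observe two basic invariances of $\Psi$. Since $\Psi$ is a linear bijection sending $\Lambda$ to $\Z^n$, it restricts to a bijection $D \cap \Lambda \to \Psi(D) \cap \Z^n$, so $|D \cap \Lambda| = |\Psi(D) \cap \Z^n|$. Writing $V$ for the matrix whose columns are the basis $v_1,\ldots,v_n$, we have $\Psi = V^{-1}$, hence $|\det \Psi| = 1/\det \Lambda$, and therefore $\Vol(\Psi(D)) = \Vol(D)/\det \Lambda$. In particular, the left-hand side of the inequality we want to prove equals
\begin{equation*}
\left||\Psi(D) \cap \Z^n| - \Vol(\Psi(D))\right|.
\end{equation*}

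Next, since $D$ is compact and $\Psi$ is continuous, $\Psi(D)$ is compact. By hypothesis $\Psi(D)$ satisfies conditions (1) and (2) of Theorem \ref{thmDav}, so that theorem applies directly to $\Psi(D)$ with the same constant $h$ and yields
\begin{equation*}
\left||\Psi(D) \cap \Z^n| - \Vol(\Psi(D))\right| \leq \sum_{j=0}^{n-1} h^{n-j} V_j(\Psi(D)),
\end{equation*}
where $V_j(\Psi(D))$ is the sum of the $j$-dimensional volumes of the orthogonal projections of $\Psi(D)$ onto the coordinate $j$-subspaces of $\R^n$, exactly as in the statement of the lemma. Combining this with the identifications from the previous paragraph gives the claim.

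In this proof there is no genuine obstacle: the only thing to be careful about is that the quantity $V_j$ on the right-hand side of the lemma is indexed by $\Psi(D)$, not by $D$. This is essential, because $\Psi$ does not in general map coordinate subspaces to coordinate subspaces, so passing to $V_j(D)$ would not be valid. Keeping $V_j(\Psi(D))$ is precisely what allows the direct invocation of Theorem \ref{thmDav} without any further geometric argument; replacing it by a quantity intrinsic to $\Lambda$ and $D$ is the task of the subsequent results (which will eventually use geometry of numbers, via the successive minima $\lambda_i$, to bound the projected volumes of $\Psi(D)$ in terms of those of $D$).
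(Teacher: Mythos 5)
Your proof is correct and follows exactly the same route as the paper: transport $\Lambda$ to $\Z^n$ and $D$ to $\Psi(D)$ via $\Psi$, note the invariance of the point count, the volume scaling $\Vol(\Psi(D)) = \Vol(D)/\det\Lambda$, and compactness of $\Psi(D)$, then invoke Theorem~\ref{thmDav} directly. Your closing remark explaining why one must keep $V_j(\Psi(D))$ rather than $V_j(D)$ is a useful observation but does not change the argument.
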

\begin{proof}
Clearly, we have
$$
|\D\cap \Lambda |=|\Ppsi ( \D)\cap \Z^n|,
$$
and $\Vol(\Ppsi(\D))=|\det\Ppsi |\Vol(\D)$. The inverse of $\Ppsi$ corresponds to the matrix with columns ${v}_1, \ldots , {v}_n$, and therefore $|\det\Ppsi|^{-1}=\det \Lambda$.
As $\D$ is compact also $\Ppsi(\D)$ is compact. Applying Theorem \ref{thmDav} yields the claim.
\end{proof}

In the next two lemmas we simply reproduce arguments of Thunder from \cite{34} to obtain an error term as anticipated in (\ref{Thunderbound}).

Let $1\leq j \leq n-1$, let $I$ be any subset of $\{ 1, \ldots ,n \}$ of cardinality $j$, and let $\overline{I}$ be its complement. Let $\Sigma_I$ and $\Lambda_I$ be 
respectively the subspace 
of $\R^n$ and the sublattice of ${v}_1\IZ+\cdots +{v}_n\IZ$ generated by the vectors ${v}_i$, $i \in I$. For any set $\D \subseteq \R^n$ we define
$$
\D^I=\left\lbrace {x} \in \Sigma_I :{x}+{y} \in \D \mbox{ for some } {y} \in  \Sigma_{\overline{I}} \right\rbrace .
$$
This is nothing but the projection of $\D$ to $\Sigma_I$ with respect to $\Sigma_{\overline{I}}$. 

\begin{lemma}\label{PsiV}
Suppose $\D\subseteq \R^n$ is compact. Then, for every $j=1,\ldots, n-1$,
\begin{equation*}
V_j(\Psi(\D)) \leq\sum_{|I|=j}\frac{2^j}{B_j}\frac{\Vol_j\left(\D^I\right)    }{ \lambda_1\cdots \lambda_j},   
\end{equation*}
where $B_j$ is the volume of the $j$-dimensional unit-ball. 
\end{lemma}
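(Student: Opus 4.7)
The plan is to express $V_j(\Psi(D))$ as a sum over $I$ of the $j$-volumes of $\Psi(D^I)$, compute the Jacobian of $\Psi|_{\Sigma_I}$ explicitly, and lower bound $\det\Lambda_I$ using Minkowski's second theorem.

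Set $\Sigma^{(I)} := \mathrm{span}\{e_i : i \in I\}$ and $\Sigma^{(\overline{I})} := \mathrm{span}\{e_i : i \in \overline{I}\}$. Since $\Psi(v_i) = e_i$, the linear isomorphism $\Psi$ sends $\Sigma_I$ onto $\Sigma^{(I)}$ and $\Sigma_{\overline{I}}$ onto $\Sigma^{(\overline{I})}$. Because the coordinate subspaces $\Sigma^{(I)}$ and $\Sigma^{(\overline{I})}$ are orthogonal complements in $\R^n$, the orthogonal projection of $\Psi(D)$ onto $\Sigma^{(I)}$ coincides with $\Psi$ applied to the oblique projection of $D$ onto $\Sigma_I$ along $\Sigma_{\overline{I}}$; that is, it equals $\Psi(D^I)$. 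Moreover, $\Psi|_{\Sigma_I} : \Sigma_I \to \Sigma^{(I)}$ sends the fundamental parallelepiped of $\Lambda_I$, whose $j$-volume is $\det\Lambda_I$, to the unit cube in $\Sigma^{(I)}$, whose $j$-volume is $1$. Hence its $j$-volume-change factor is $1/\det\Lambda_I$ and
$$\Vol_j(\Psi(D^I)) = \frac{\Vol_j(D^I)}{\det\Lambda_I}.$$

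The key step is a lower bound for $\det\Lambda_I$. Let $\mu_1 \leq \cdots \leq \mu_j$ be the successive minima of the $j$-dimensional lattice $\Lambda_I$ (embedded in $\Sigma_I$) with respect to the zero-centered unit ball. Since $\Lambda_I \subseteq \Lambda$, any $i$ linearly independent vectors of $\Lambda_I$ of norm at most $\mu_i$ are also such vectors of $\Lambda$, so $\mu_i \geq \lambda_i$ for $i = 1, \ldots, j$. Applying Minkowski's second theorem to $\Lambda_I$ gives
$$B_j\,\mu_1 \cdots \mu_j \leq 2^j \det\Lambda_I,$$
and combining with $\mu_i \geq \lambda_i$ yields
$$\det\Lambda_I \geq \frac{B_j}{2^j}\,\lambda_1 \cdots \lambda_j.$$

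Summing the Jacobian identity over all $I \subseteq \{1, \ldots, n\}$ with $|I| = j$ and inserting the above lower bound gives
$$V_j(\Psi(D)) = \sum_{|I|=j} \Vol_j(\Psi(D^I)) \leq \sum_{|I|=j} \frac{2^j}{B_j}\,\frac{\Vol_j(D^I)}{\lambda_1 \cdots \lambda_j},$$
as required. The only non-routine ingredient is the Minkowski-type lower bound for $\det\Lambda_I$; everything else reduces to observing that $\Psi$ transports the oblique projections defining $D^I$ onto the orthogonal projections computing the terms of $V_j(\Psi(D))$, with an explicit $j$-dimensional volume-change factor.
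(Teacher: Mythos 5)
Your proof is correct and follows essentially the same approach as the paper: identify the orthogonal projection of $\Psi(\D)$ onto a coordinate subspace with $\Psi$ applied to the oblique projection $\D^I$, compute the resulting Jacobian factor $1/\det\Lambda_I$, and lower bound $\det\Lambda_I$ via $\lambda_i(\Lambda_I)\geq\lambda_i$ and Minkowski's second theorem. You simply spell out, in more detail than the paper, why the projections correspond under $\Psi$ and where the factor $1/\det\Lambda_I$ comes from.
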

\begin{proof}
The orthogonal projection of $\Psi(\D)$ to the coordinate subspace spanned by ${e}_i$, $i\in I$ for some choice 
of $I$, corresponds to the projection $\D^I$ of $\D$ to $\Sigma_I $ with respect to $\Sigma_{\overline{I}} $.
Therefore we have that
$$
V_j(\Psi(\D))=\sum_{|I|=j}\frac{\Vol_j\left(\D^I\right)    }{\mbox{det}\Lambda_I} .     
$$
As $\lambda_i(\Lambda_I)\geq \lambda_i$ for $1\leq i\leq j$ we deduce from Minkowski's second theorem
$$
\mbox{det}\Lambda_I \geq  \frac{B_j}{2^j} \lambda_1\cdots \lambda_j,
$$
and this proves the lemma.
 \end{proof}

\begin{definition}\label{defV'}
Suppose $\D\subseteq \IR^n$ is compact, and suppose $0<j<n$. We define $V'_j(\D)$ to be the supremum of the volumes of the orthogonal projections of $\D$ 
to any $j$-dimensional linear subspace of $\R^n$, and we set
$V'_0(\D)=1$.
\end{definition}
\begin{lemma}\label{VI}
Suppose $\D\subseteq \R^n$ is compact. Then
for any $j=1,\ldots ,n-1$ and any $I\subseteq \{ 1, \ldots ,n \}$ with $|I|=j$ there exists a constant $c=c(n,j)$ such that 
$$
\Vol_j\left(\D^I\right)\leq c V'_j(\D).
$$
\end{lemma}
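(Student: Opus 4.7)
My plan is to reduce the oblique projection $\D^I$ to an orthogonal projection of $\D$ onto a single $j$-dimensional subspace, and then to control the resulting volume-distortion factor via the near-orthogonality of the basis $v_1,\ldots,v_n$ furnished by \cite[Lemma~8 p.135]{18}.

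Let $\pi_I:\R^n\to\Sigma_I$ be the oblique projection along $\Sigma_{\overline{I}}$ (so that $\pi_I(\D)=\D^I$) and let $\pi:\R^n\to\Sigma_{\overline{I}}^\perp$ denote the standard orthogonal projection. Both maps have kernel $\Sigma_{\overline{I}}$. Since $\Sigma_I\cap\Sigma_{\overline{I}}=\{0\}$, the restriction $\pi|_{\Sigma_I}\colon\Sigma_I\to\Sigma_{\overline{I}}^\perp$ is a linear bijection between $j$-dimensional spaces; writing $T$ for its inverse, a direct check on the oblique decomposition $x=y+z$ with $y\in\Sigma_I$, $z\in\Sigma_{\overline{I}}$ yields the factorization $\pi_I=T\circ\pi$. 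Hence
$$
\Vol_j(\D^I)=|\det T|\,\Vol_j(\pi(\D))\leq |\det T|\,V'_j(\D),
$$
and the problem reduces to bounding $|\det T|$ by a constant depending only on $n$ and $j$.

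To compute $|\det T|$, let $P_I$ and $P_{\overline{I}}$ denote the parallelepipeds spanned by $\{v_i:i\in I\}$ and $\{v_i:i\in\overline{I}\}$ respectively. In an orthonormal basis of $\R^n$ adapted to the orthogonal splitting $\R^n=\Sigma_{\overline{I}}\oplus\Sigma_{\overline{I}}^\perp$, the matrix whose columns are $v_1,\ldots,v_n$ is block triangular, and computing its determinant block-wise yields the Cavalieri-type identity $\det\Lambda=\Vol_{n-j}(P_{\overline{I}})\,\Vol_j(\pi(P_I))$. Combining this with $\Vol_j(\pi(P_I))=|\det(\pi|_{\Sigma_I})|\,\Vol_j(P_I)=|\det T|^{-1}\,\Vol_j(P_I)$ gives
$$
|\det T|=\frac{\Vol_j(P_I)\,\Vol_{n-j}(P_{\overline{I}})}{\det\Lambda}.
$$
Hadamard's inequality then bounds each parallelepiped volume by the product of its edge lengths, the estimate $|v_i|\leq i\lambda_i$ yields $|\det T|\leq n!\,\lambda_1\cdots\lambda_n/\det\Lambda$, and Minkowski's second theorem finally gives $\lambda_1\cdots\lambda_n\leq 2^n\det\Lambda/B_n$, so $|\det T|\leq n!\,2^n/B_n$.

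The main obstacle is the last step. A priori the oblique projection $\pi_I$ can be wildly expanding whenever $\Sigma_I$ is nearly parallel to $\Sigma_{\overline{I}}$, and for a generic basis of $\Lambda$ the factor $|\det T|$ has no upper bound at all; what saves us is that the specific basis produced by \cite[Lemma~8 p.135]{18} is quasi-orthogonal in the quantitative sense that $\prod_i|v_i|/\det\Lambda$ is universally bounded. This is precisely the combined content of $|v_i|\leq i\lambda_i$ and Minkowski's second theorem, and once it is in place the remainder of the argument is essentially bookkeeping in linear algebra.
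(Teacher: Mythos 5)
Your proposal is correct, and the opening reduction is the same as the paper's: both arguments factor the oblique projection onto $\Sigma_I$ through the orthogonal projection onto $\Sigma_{\overline{I}}^\perp$ and reduce to bounding the Jacobian $|\det T|$ of $T=(\pi|_{\Sigma_I})^{-1}$ (the paper's $\varphi^{-1}$). Where you genuinely diverge is in how $|\det T|$ is controlled. The paper introduces the dual vectors $v_i'$ (built from wedge products), uses $v_p\cdot v_q'=\delta_{pq}$ to estimate the operator norm $\|T\|_{op}\leq j\,n!2^n/B_n$, and then converts this to a determinant bound via Hadamard's inequality on the matrix entries of $T$, yielding $|\det T|\leq (j^{3/2}n!2^n/B_n)^j$. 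You instead exploit the block-triangular structure of the matrix $(v_1,\ldots,v_n)$ in an orthonormal basis adapted to $\R^n=\Sigma_{\overline{I}}\oplus\Sigma_{\overline{I}}^\perp$ to obtain the exact identity $\det\Lambda=\Vol_{n-j}(P_{\overline{I}})\,\Vol_j(\pi(P_I))$, combine it with $\Vol_j(\pi(P_I))=|\det T|^{-1}\Vol_j(P_I)$ to get the closed form $|\det T|=\Vol_j(P_I)\Vol_{n-j}(P_{\overline{I}})/\det\Lambda$, and then apply Hadamard at the level of parallelepiped volumes. Your route avoids the dual basis entirely, is geometrically more transparent, and produces the sharper constant $|\det T|\leq n!2^n/B_n$ (no exponent $j$). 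Both ultimately rest on the same two quantitative inputs, $|v_i|\leq i\lambda_i$ and Minkowski's second theorem, so the improvement is organizational rather than in the underlying tools.
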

\begin{proof}

Let ${v}_i'$ be the vectors defined by
$$
{v}_i'= \frac{{v}_1\wedge \cdots \wedge {v}_{i-1} \wedge {v}_{i+1} \wedge \cdots \wedge {v}_n }{|{v}_1\wedge  \cdots \wedge {v}_n |} =\frac{{v}_1\wedge \cdots \wedge {v}_{i-1} 
\wedge {v}_{i+1} \wedge \cdots \wedge {v}_n }{\text{det}\Lambda} .
$$
Now let $\Sigma_{\overline{I}}^\bot$ be the linear subspace generated by ${v}_i'$, $i\in I$ (and thus orthogonal to $ \Sigma_{\overline{I}} $). Let $\widehat{\D^I}$ be 
the orthogonal projection of $\D$ on $\Sigma_{\overline{I}}^\bot$. This means
$$
\widehat{\D^I}=\left\lbrace {x} \in \Sigma_{\overline{I}}^\bot:{x}+{y} \in \D \mbox{ for some } 
{y} \in  \Sigma_{\overline{I}} \right\rbrace .
$$
There exists a linear transformation $\varphi$ between $\Sigma_I$ and $\Sigma_{\overline{I}}^\bot$ that maps a point of $\Sigma_I$ to its orthogonal projection on 
$\Sigma_{\overline{I}}^\bot$. Note that $\varphi(\D^I)\subseteq \widehat{\D^I}$ because, for every ${x} \in \D^I$, ${x} ={z}+{y} $ for some ${z} \in \D$ and 
${y} \in \Sigma_{\overline{I}}$, and $\varphi({x})={x}+ {y}' $ for some ${y}' \in \Sigma_{\overline{I}}$, and thus $\varphi({x})={z}+({y}+{y}') \in  \widehat{\D^I}$. 
Moreover, $\varphi$ is an injective map. Indeed, suppose we had ${x},{y} \in \Sigma_I$ with the same image, then ${x}-{y} \in \Sigma_{\overline{I}} \cap \Sigma_I$, which 
means ${x}={y}$. Therefore we can see $\varphi$ as an automorphism of $\R^j$. We want to bound the determinant of the inverse of $\varphi$. Let
$$
{x}= \sum_{i\in I} a_i {v}_i  \in \Sigma_{I}.
$$
Since $  {x} - \varphi({x})\in \Sigma_{\overline{I}}$ and by definition ${v}_p\cdot{v}_q' = \delta_{pq}$,  
we have, for every $i \in I$, $({x} - \varphi({x}))\cdot{v}_i'=0 $ and $a_i={x}\cdot {v}_i'=\varphi({x})\cdot {v}_i'$. Thus,
$$
|{x}| \leq \sum_{i\in I} |a_i| |{v}_i| \leq \sum_{i \in I}|\varphi({x})| \left| {v}_i'\right|| {v}_i| .
$$
The condition $| {v}_i|\leq i \lambda_i$, the definition of ${v}'_i$ and Minkowski's second Theorem imply that
$$
 | {v}_i'|| {v}_i|\leq \frac{\prod_p |{v}_p | }{\text{det}\Lambda} \leq \frac{n!\prod_p \lambda_p }{\text{det}\Lambda} \leq \frac{n! 2^n}{B_n}.
$$
Thus,
$$
|{x}| \leq j  \frac{n! 2^n}{B_n} |\varphi({x})|,
$$
and this implies
$$
\| \varphi^{{-1}} \|_{op} \leq  j  \frac{n! 2^n}{B_n},
$$
where $\| \cdot \|_{op}$ is the operator norm. Suppose $\varphi^{-1}$ corresponds to the matrix 
$\left( a_{pq} \right)_{p,q=1}^j$ then 
$\| \varphi^{{-1}} \|_{op} \geq \max_{p,q}\left\lbrace \left| a_{pq}\right| \right\rbrace $. By Hadamard's inequality
$$
\left|\det\left( \varphi^{{-1}}\right)\right| \leq \prod_{p=1}^{j}\left( \sum_{q=1}^j   a_{pq}^2 \right)^{1/2}\leq \left(  \sqrt{j} \| \varphi^{{-1}} \|_{op} \right)^j.
$$
Finally, since $\D^I\subseteq \varphi^{-1}\left(\widehat{\D^I}\right)$,
$$
\Vol_{j}\left(\D^I\right)\leq \Vol_j\left(\varphi^{-1}\left(\widehat{\D^I}\right)\right)\leq \left( j^{3/2}  \frac{n! 2^n}{B_n}  \right)^j  
\Vol_{j}\left(\widehat{\D^I}\right)\leq  \left( j^{3/2}  \frac{n! 2^n}{B_n}  \right)^j V'_j(\D) .
$$

\end{proof}

\section{O-minimal structures}\label{o-minimalstructures}
In this section we state the basic properties used later on. Most of the results are taken literally from \cite{vandenDries}.

We start with a list of simple facts that will be used in the sequel, sometimes without explicitly referring to them.
\begin{lemma}\label{basicfactsdefsets}
\hfill 
\begin{enumerate}[i)]
\item $A, B \in \mathcal{S}_n \Rightarrow A\cap B\in \mathcal{S}_n$;
\item $A\in \mathcal{S}_n, B\in \mathcal{S}_m \Rightarrow A\times B\in \mathcal{S}_{n+m}$;
\item $A\in \mathcal{S}_n, 1\leq k\leq n \Rightarrow \{(x_1,\ldots,x_k,x_1,\ldots,x_n): (x_1,\ldots,x_n)\in A\}\in \mathcal{S}_{k+n}$;
\item $A\in \mathcal{S}_n$, $\sigma$ a permutation on $n$ coordinates $\Rightarrow \sigma A\in \mathcal{S}_n$;
\item $A\in \mathcal{S}_n \Rightarrow \pi_C(A) \in \mathcal{S}_n$, where $C$ is a coordinate subspace in $\R^n$ and $\pi_C$ 
is the orthogonal projection to $C$;
\item $S\in \mathcal{S}_{m+n}, a\in \R^m \Rightarrow S_a=\{x\in \R^n: (a,x)\in S\} \in \mathcal{S}_n$.
\end{enumerate}
\end{lemma}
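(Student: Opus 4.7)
The plan is to derive each item from the defining axioms (1)--(6) of Definition \ref{defomin}, using earlier items of the lemma as they are established. The only nontrivial point is that several of these facts require combining the axioms in a specific order, most notably using projections together with diagonal sets to simulate permutations of coordinates.

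First, (i) is immediate from the boolean algebra axiom (1), since $A\cap B = \R^n\setminus\bigl((\R^n\setminus A)\cup(\R^n\setminus B)\bigr)$. For (ii), I would apply axiom (2) iteratively: starting from $A\in\mathcal{S}_n$, successive products with $\R$ place $A\times\R^m$ in $\mathcal{S}_{n+m}$, and similarly $\R^n\times B\in\mathcal{S}_{n+m}$; then (i) gives $A\times B=(A\times\R^m)\cap(\R^n\times B)\in\mathcal{S}_{n+m}$. For (iv), it suffices to handle transpositions of adjacent coordinates, since the symmetric group is generated by these. A transposition of the $i$th and $(i+1)$st coordinates can be realized by intersecting $A\times\R^2$ with the diagonal sets $\{x_i=x_{n+1}\}$ and $\{x_{i+1}=x_{n+2}\}$ (which are in $\mathcal{S}_{n+2}$ by axiom (3) and (ii)), and then projecting away $x_i,x_{i+1}$ using axiom (4). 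Here I need to project variables that are not the last two, which is precisely why one first needs to know that projection commutes with coordinate swaps --- so I would set this up carefully by first permuting the target coordinates into the last two slots and then applying (4) twice.

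For (iii), the set in question is a subset of $\R^{k+n}$; it can be constructed as $(\R^k\times A)\in\mathcal{S}_{k+n}$ intersected with the diagonal conditions $x_i=x_{k+i}$ for $i=1,\ldots,k$, each of which is definable by (3), (ii), and (iv). For (v), an orthogonal projection to a coordinate subspace $C$ corresponds, after a permutation of coordinates from (iv), to a composition of the basic projection $\pi$ of axiom (4); iterating axiom (4) then gives $\pi_C(A)\in\mathcal{S}_{\dim C}$, and embedding back into $\R^n$ via (ii) and (iv) yields $\pi_C(A)\in\mathcal{S}_n$ in the sense intended. Finally, for (vi), the singleton $\{a\}=\{a_1\}\times\cdots\times\{a_m\}\in\mathcal{S}_m$ by axiom (5) and (ii); then $(\{a\}\times\R^n)\cap S\in\mathcal{S}_{m+n}$ by (ii) and (i), and iterating axiom (4) to project away the first $m$ coordinates (with a permutation from (iv) to bring them to the last positions) gives $S_a\in\mathcal{S}_n$.

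The only place I anticipate needing real care is (iv): the statement of axiom (4) only allows projection onto the \emph{first} $n$ coordinates of $\R^{n+1}$, so constructing a general permutation without circularly invoking (iv) requires one to first establish enough of (ii), (iii), (v) (with projections onto trailing coordinates) in a suitable order. I would therefore prove the items in the order (i), (ii), then a restricted version of (v) that only projects away trailing coordinates (direct from (4)), then (iv) via diagonals and the restricted (v), then (iii), and finally the full (v) and (vi). Once this bookkeeping is settled, each verification is routine.
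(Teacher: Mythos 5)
Your handling of (i), (ii), (iii), (v), and (vi) is in the same spirit as the paper, and the basic tools (diagonals plus projections) are the right ones. The problem is (iv), and the ordering you propose does not actually repair it.

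You want to realize the transposition of coordinates $i$ and $i+1$ by intersecting $A\times\R^2$ with $\{x_i=x_{n+1}\}$ and $\{x_{i+1}=x_{n+2}\}$ and then ``projecting away $x_i,x_{i+1}$.'' There are two difficulties. First, axiom (4) only lets you delete the \emph{last} coordinate; to delete the middle coordinates $i$ and $i+1$ you would need to first move them to the tail, which is exactly the permutation you are trying to construct. The ``restricted version of (v) that only projects away trailing coordinates'' that you propose to prove first cannot circumvent this, because $x_i,x_{i+1}$ are not trailing. Second, even granting the deletion, what survives is $(x_1,\ldots,x_{i-1},x_{i+2},\ldots,x_n,x_i,x_{i+1})$, a cyclic shuffle rather than the transposition; so the output is not $\sigma A$ for the $\sigma$ you intended. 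Reducing to adjacent transpositions does not buy you anything here --- the obstacle is not the complexity of $\sigma$ but where the extra coordinates are placed.

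The way out is to put the \emph{fresh} block of coordinates at the front and project off the \emph{old} ones, which then sit at the tail, and one may as well do this for an arbitrary $\sigma$ in a single step, as the paper does: $\sigma A$ is the projection onto the first $n$ coordinates of $\bigcap_{i=1}^{n}\{(u,x)\in\R^n\times A: u_i=x_{\sigma(i)}\}$. The set $\R^n\times A$ lies in $\mathcal{S}_{2n}$ by iterating axiom (2), each diagonal condition $u_i=x_{\sigma(i)}$ is an instance of axiom (3) in $\R^{2n}$, the intersection is definable by (i), and the projection is $n$ iterations of axiom (4) with no permutation needed. Once (iv) is obtained this way, the remainder of your outline for (iii), (v), and (vi) is sound.
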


\begin{proof}
The statement $i)$ is obvious from Definition \ref{defomin}. 
For $ii)$ we use that $A\times B=A\times \R^m\cap \R^n\times B$.
Now $iii)$ follows easily.
For $iv)$ we note that $\sigma A$ is the projection
to the first $n$ coordinates of the definable set $\cap_{i=1}^{n}\{(u,x)\in \R^n\times A: u_i=x_{\sigma(i)}\}$.
Then, $v)$ follows immediately. Finally, for $vi)$ we note that $S_a=\pi(S\cap \{a\}\times \R^n)$, where $\pi$
projects to the last $n$ coordinates. 
\end{proof}

Recall that a subset $X$ of $\IR^n$ is definable (in the o-minimal structure $\mathcal{S}$) if $X\in \mathcal{S}_n$.
Also recall that our o-minimal structure $\mathcal{S}$ contains the semialgebraic sets. 

\begin{definition} \label{defdefsets}
Suppose $X\subseteq  \IR^n$ is definable then we say that $f:X\rightarrow \IR^m$ is a definable function (in $\mathcal{S}$) 
if its graph $\Gamma(f)=\{(x,f(x)): x\in X\}$ is definable (in $\mathcal{S}$). 
We say that $f$ is bounded if its graph is a bounded set. 
\end{definition}
Let $\varphi$ be an endomorphism of $\R^n$. 
Then we will identify $\varphi$ with the vector $(\varphi(e_1),\ldots,\varphi(e_n))\in \R^{n^2}$, where $e_1,\ldots,e_n$ is the standard basis of $\IR^n$. 
A set of the form
\begin{equation} \label{exalgset}
\left\lbrace (\varphi, x, y)\in \R^{n^2+2n}: y=\varphi(x)  \right\rbrace, 
\end{equation}
is defined by polynomial equalities, and hence is definable.

Now suppose $X$ is a definable set, and let 
$$
C(X)=\{f:X\rightarrow \IR: f \text{ is definable and continuous}\},
$$
and
$$
C_\infty(X)=C(X)\cup\{-\infty, \infty\}.
$$
For $f$ and $g$ in $C_\infty(X)$ we write $f<g$ if $f(x)<g(x)$ for all $x\in X$. In this case we put
\begin{alignat*}1
(f,g)_X=\{(x,r)\in X\times \IR: f(x)<r<g(x)\}.
\end{alignat*}
It is not difficult to see that $(f,g)_X$ is a definable subset of $\IR^{n+1}$, e.g., $(-\infty,g)_X$ is a projection
of the definable set $\{(x,z,y,z)\in \Gamma(g)\times \R^2: y<z\}$.\\

We now come to the definition of cells which are particularly simple definable sets.
\begin{definition}\label{Defcell}
Let $(i_1,\ldots, i_n)$ be a sequence of zeros and ones of length $n$. A $(i_1,\ldots, i_n)$-cell is a 
definable subset of $\IR^n$ obtained by induction on $n$ as follows:
\begin{enumerate}
\item A $(0)$-cell is a one-element set $\{r\}\subseteq \IR$, a $(1)$-cell is a nonempty interval $(a,b)\subseteq \IR$.
\item Suppose $(i_1,\ldots, i_n)$-cells are already defined; then a $(i_1,\ldots, i_n,0)$-cell is the graph $\Gamma(f)$ of a function $f\in C(X)$, 
where $X$ is a $(i_1,\ldots, i_n)$-cell;
further, a $(i_1,\ldots, i_n,1)$-cell is a set $(f,g)_X$, where $X$ is a $(i_1,\ldots, i_n)$-cell and $f,g \in C_\infty(X)$ with $f<g$.
\end{enumerate}
A cell in $\IR^n$ is an $(i_1,\ldots, i_n)$-cell for some (necessarily unique) sequence $(i_1,\ldots, i_n)$.\\
\end{definition}
\begin{lemma} \label{conlem}
Each cell is connected in the usual topological sense. 
\end{lemma}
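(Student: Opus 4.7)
The plan is to argue by induction on the ambient dimension $n$, exploiting the recursive definition of cells. For the base case $n=1$, a $(0)$-cell is a singleton and a $(1)$-cell is a nonempty open interval; both are evidently connected. For the inductive step, assume every cell in $\R^n$ is connected, and let $C\subseteq \R^{n+1}$ be a cell. By Definition \ref{Defcell} there are two cases to handle, corresponding to the final index being $0$ or $1$.

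If $C$ is a $(i_1,\ldots,i_n,0)$-cell, then $C=\Gamma(f)$ for some $f\in C(X)$ with $X\subseteq \R^n$ a cell. The map $x\mapsto (x,f(x))$ is continuous on $X$, and its image is exactly $\Gamma(f)$. Since $X$ is connected by the inductive hypothesis and continuous images of connected sets are connected, $C$ is connected.

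If $C$ is a $(i_1,\ldots,i_n,1)$-cell, then $C=(f,g)_X$ where $X\subseteq \R^n$ is a cell and $f,g\in C_\infty(X)$ with $f<g$. The idea is a standard fiber-connectedness argument. Suppose, for a contradiction, that $C=A\sqcup B$ with $A,B$ nonempty and relatively open in $C$. For each $x\in X$ the fiber $\{x\}\times (f(x),g(x))$ is a (nonempty) open interval, hence connected, so it lies entirely in $A$ or entirely in $B$. Set $X_A=\{x\in X:\{x\}\times(f(x),g(x))\subseteq A\}$ and $X_B$ analogously; these partition $X$. To see $X_A$ is open in $X$, pick $x_0\in X_A$ and any $r_0\in(f(x_0),g(x_0))$; then $(x_0,r_0)\in A$, so some box $V\times(r_0-\varepsilon,r_0+\varepsilon)$ lies in $A$. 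Continuity of $f,g$ (with the obvious reading when one equals $\pm\infty$) lets us shrink $V$ to a neighborhood of $x_0$ in $X$ on which $f<r_0<g$, forcing $V\subseteq X_A$. The same argument gives $X_B$ open. Since $X$ is connected by the inductive hypothesis, one of $X_A, X_B$ must be empty, contradicting the nonemptiness of $A$ and $B$.

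The only mild technicality will be handling the cases $f\equiv -\infty$ or $g\equiv +\infty$ inside the continuity step, but this is routine: the inequality $r_0<g$ (respectively $f<r_0$) holds automatically on all of $X$ in those cases, so one only needs to shrink $V$ against the finite side.
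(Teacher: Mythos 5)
Your proof is correct, with one small slip in wording: since $A$ is only \emph{relatively} open in $C$, the box $V\times(r_0-\varepsilon,r_0+\varepsilon)$ need not lie in $A$ itself, only its intersection with $C$ does; the rest of the argument goes through unchanged once you phrase it that way. The route is genuinely different from the paper's, which gives no self-contained argument at all and instead cites \cite[Ch.3, (2.9) Proposition]{vandenDries} (cells are \emph{definably} connected) together with \cite[Exercise 7, p.59]{vandenDries} (over $\R$, a definably connected definable set is topologically connected). You instead argue directly by induction on the ambient dimension, with the continuous-image argument for graph cells and the fiber-connectedness argument for band cells. Your version is more elementary and self-contained, working purely with the usual topology; the cited route, via definable connectedness, is the one that survives passage to o-minimal structures over arbitrary real closed fields, a generality the paper does not in fact need. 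Underneath, the two share the same inductive skeleton: van den Dries' Proposition 2.9 is also proven by induction on the cell structure, only with definable connectedness in place of topological connectedness.
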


\begin{proof}
This follows from \cite[Exercise 7, p.59]{vandenDries} combined with \cite[Ch.3, (2.9) Proposition]{vandenDries}. 
\end{proof}

We need another definition.
\begin{definition}
A decomposition of $\IR^n$ is a special kind of partition into finitely many cells. 
Again the definition is by induction on $n$:\\
(1) a decomposition of  $\IR$ is a collection 
\begin{alignat*}1
\{(-\infty,a_1), (a_1,a_2),\ldots,(a_k,\infty),\{a_1\},\ldots,\{a_k\}\},
\end{alignat*}
where $a_1<\cdots <a_k$ are points in $\IR$.\\
(2) a decomposition of $\IR^{n+1}$ is a finite partition of $\IR^{n+1}$ into cells $A$ such that the set of projections $\pi(A)$
is a decomposition of $\IR^n$. (Here $\pi: \IR^{n+1}\rightarrow \IR^n$ is the usual projection map on the first $n$ coordinates.)
\end{definition}

A decomposition $\mathcal{D}$ of $\IR^n$ is said to partition a set $S \subseteq \IR^n$ if each cell in $\mathcal{D}$ is either part of $S$ or disjoint from $S$. 
We can now state the following theorem, which is a special case of the cell decomposition theorem (\cite[Ch.3, (2.11)]{vandenDries} or \cite[4.2]{vandenDries96}).

\begin{theorem} \label{celldecthm}

Given a definable set $S \subseteq \IR^n$ there is a decomposition of $\IR^n$ partitioning $S$.

\end{theorem}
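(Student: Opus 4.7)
The plan is to prove the statement by induction on $n$, but the induction must be strengthened in two ways. First, one needs a version that partitions several sets simultaneously rather than just one, so that given definable $S_1,\ldots,S_k \subseteq \R^n$ there is a single decomposition of $\R^n$ partitioning all of them. Second, one must run this induction in parallel with a statement about definable functions: for every definable $f \colon A \to \R$ with $A \subseteq \R^n$ definable, there is a decomposition of $\R^n$ partitioning $A$ such that $f$ restricted to each cell contained in $A$ is continuous. Call these statements $(\mathrm{I}_n)$ and $(\mathrm{II}_n)$.

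For the base case $n=1$, statement $(\mathrm{I}_1)$ is immediate from axiom (6) of Definition \ref{defomin}, which says every definable subset of $\R$ is a finite union of points and open intervals; the endpoints yield the decomposition. Statement $(\mathrm{II}_1)$ requires the monotonicity theorem: every definable $f \colon (a,b) \to \R$ is, away from finitely many points, continuous and either strictly monotone or constant on each subinterval. This is proved from the axioms by analyzing, for each monotonicity/continuity type, the definable set of points $x$ at which $f$ has that local behavior, and using axiom (6) to conclude that only finitely many transitions occur.

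For the inductive step $(\mathrm{I}_{n+1})$, write $\pi \colon \R^{n+1} \to \R^n$ for the projection on the first $n$ coordinates. For $a \in \R^n$ the fiber $S_a$ is definable (Lemma \ref{basicfactsdefsets}, vi), hence by axiom (6) a finite union of points and open intervals. The heart of the argument is a uniform finiteness statement: there exists $K \in \N$ such that every fiber $S_a$ has at most $K$ connected components. Given this, one stratifies $\R^n$ definably according to the combinatorial type of $S_a$ (number of components, which ones are singletons, and the ordering of their endpoints). Each stratum is definable, and on each stratum the endpoints $b_1(a) < \cdots < b_r(a)$ and the $\pm\infty$ type of the outer intervals are definable functions of $a$. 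Applying $(\mathrm{I}_n)$ to refine into the strata, and then $(\mathrm{II}_n)$ to each $b_i$, one obtains a decomposition of $\R^n$ over which each $b_i$ is continuous. Over every cell $C$ of this decomposition the graphs of the $b_i|_C$ are $(\ast,0)$-cells in $\R^{n+1}$ and the strips between them are $(\ast,1)$-cells, giving the required decomposition of $\R^{n+1}$ partitioning $S$. Statement $(\mathrm{II}_{n+1})$ is deduced analogously by applying $(\mathrm{I}_{n+1})$ to suitably chosen definable subsets of $A$ measuring where $f$ fails to be continuous.

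The main obstacle is the uniform finiteness bound on the number of components of $S_a$. Its proof is itself by induction, and proceeds by contradiction: were the component count unbounded in $a$, one could extract, using definable Skolem/choice functions (whose existence is itself a consequence of the cell structure being developed) and axiom (4), a definable subset of $\R$ with infinitely many connected components, violating axiom (6). Disentangling this requires care, since the Skolem functions and the cell decomposition are intertwined; the standard device is to prove a slightly weaker "finiteness theorem" for families first, and then bootstrap to uniform finiteness using the inductive hypothesis $(\mathrm{II}_n)$ applied to a definable function that records the component count.
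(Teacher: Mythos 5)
The paper does not give a proof of this statement at all: it simply cites van den Dries, Chapter~3, (2.11), where the Cell Decomposition Theorem is proved. Your sketch is, in broad outline, a faithful summary of that proof — the strengthened induction on a pair of statements $(\mathrm{I}_n)$ and $(\mathrm{II}_n)$, the $n=1$ base case via axiom (6) and the Monotonicity Theorem, and the inductive step via a uniform bound on the combinatorial type of fibers, refined by $(\mathrm{I}_n)$ and $(\mathrm{II}_n)$ — so in substance you are following the same route as the reference the paper points to.

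One caveat concerns how you propose to obtain the uniform finiteness bound. You describe a contradiction argument that uses definable Skolem functions to ``extract a definable subset of $\R$ with infinitely many connected components'' from a family with unbounded but finite component counts. As stated this extraction step does not obviously go through: since each individual fiber has only finitely many components, it is not clear how to produce a single definable subset of $\R$ with infinitely many components without already having tools (like cell decomposition, uniform finiteness, or definable choice) that are being developed simultaneously. You do flag this circularity, which is good, but the standard resolution is not the one you gesture at. In van den Dries the Finiteness Lemma for $\R^2$ (Ch.~3, (1.7)) is proved \emph{before} cell decomposition and \emph{without} Skolem functions, by a direct argument using only the Monotonicity Theorem; the general uniform finiteness in $\R^{n+1}$ then falls out of the $(\mathrm{I}_n)$/$(\mathrm{II}_n)$ induction rather than being an independent contradiction argument. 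Definable Skolem functions (Proposition~\ref{skolem} in this paper) are a \emph{consequence} of cell decomposition, not an ingredient, so they cannot be invoked at this stage without introducing a genuine circularity. If you replace your finiteness step with the direct monotonicity-based Finiteness Lemma, the sketch becomes a correct account of the textbook proof.
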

\begin{proof}
This follows immediately from  $(I_n)$ in \cite[Ch.3, (2.11)]{vandenDries}.
\end{proof}

We recall the definition of dimension of a definable set from \cite[Ch.4]{vandenDries}.
\begin{definition}\label{dimension}
Let  $S\subseteq \R^n$ be nonempty and definable. The dimension of $S$ is defined as
$$
\dim S = \max \{i_1+\cdots +i_n : S \mbox{ contains an } (i_1, \ldots , i_n)-\mbox{cell}  \}.
$$
To the empty set we assign the dimension $-\infty$.
\end{definition}
Note that a definable set of dimension zero is a finite collection of points.
Next we collect some basic facts about definable functions. These will be used in the sequel, sometimes without further mention.
\begin{lemma}\label{Propdefdim}
Suppose $f:A\rightarrow B$ is a definable function and suppose $C$ is a nonempty definable subset of $A$. Then
\begin{enumerate}[i)]
\item $A$ and $f(A)$ are definable;
\item The restriction $f\mid_C:C\rightarrow B$ is definable;
\item If $f$ is bijective then $f^{-1}:B\rightarrow A$ is definable;
\item If $f$ is bijective then $\dim A=\dim B$. 
\end{enumerate}
\end{lemma}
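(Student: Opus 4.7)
The plan for parts (i)-(iii) is straightforward graph manipulation, exploiting the closure properties of definable sets recorded in Lemma \ref{basicfactsdefsets}. For (i), the sets $A$ and $f(A)$ are the coordinate projections of $\Gamma(f)\subseteq\R^{n+m}$ onto the first $n$ and the last $m$ coordinates respectively, both definable by Lemma \ref{basicfactsdefsets}(v). For (ii), $\Gamma(f|_C)=\Gamma(f)\cap(C\times\R^m)$ is a definable intersection (Lemma \ref{basicfactsdefsets}(i),(ii) together with axiom (2) of Definition \ref{defomin}). For (iii), $\Gamma(f^{-1})$ is the image of $\Gamma(f)$ under the coordinate permutation that swaps the first $n$ entries with the last $m$, which is definable by Lemma \ref{basicfactsdefsets}(iv).

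For (iv), I would apply the cell decomposition Theorem \ref{celldecthm} to $\Gamma(f)\subseteq\R^{n+m}$ to obtain a finite partition of $\Gamma(f)$ into cells $C_1,\dots,C_k$. By the inductive definition of a decomposition, the projection $\pi_A$ onto the first $n$ coordinates sends this to a decomposition of $\R^n$; since $f$ is bijective, $\pi_A|_{\Gamma(f)}$ is injective, so the images $X_\ell:=\pi_A(C_\ell)$ are pairwise disjoint cells that partition $A$. Moreover, for each $\ell$ the shape $(i_1,\ldots,i_{n+m})$ of $C_\ell$ must have zeros in its last $m$ positions (otherwise varying a ``$1$''-coordinate would contradict $\pi_A$-injectivity), so $\dim C_\ell = i_1+\cdots+i_n = \dim X_\ell$. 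Using that $\dim A$ equals the maximum dimension of cells in any partition (a consequence of the cell decomposition definition of dimension), one obtains $\dim A=\max_\ell\dim C_\ell$.

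The main obstacle is producing the symmetric statement on the $B$-side, since cells are sensitive to coordinate order and $Y_\ell:=\pi_B(C_\ell)$ is a priori only a definable subset of $B$, not visibly a cell. To handle this I would reapply cell decomposition to $\sigma(\Gamma(f))\subseteq\R^{m+n}$, where $\sigma$ swaps the first $n$ with the last $m$ coordinates, producing a second cell decomposition adapted to $B$. Passing to a common refinement compatible with $\Gamma(f)$ under both orderings (again by cell decomposition), each refined piece $C$ projects bijectively to both a cell in $A$ and a cell in $B$, with the dimensions of both projections forced (by the injectivity constraint analyzed above in each ordering) to equal the number of ``$1$''-indices along $C$. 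Taking maxima over the pieces then yields $\dim A=\dim B$. A cleaner alternative, once (i)-(iii) have been established, is to quote the invariance of dimension under definable bijections proved in Chapter 4 of \cite{vandenDries}.
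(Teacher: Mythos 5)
Your parts (i)--(iii) are correct and match what the paper (very tersely) does: (i) via projections of $\Gamma(f)$, (ii) via intersecting $\Gamma(f)$ with a product (the paper uses $C\times f(A)$ instead of $C\times\R^m$, an inessential difference), (iii) via a coordinate permutation.

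For (iv), your fallback of citing van den Dries is precisely what the paper does: it refers to [Ch.4, (1.3) Proposition (ii)] of \cite{vandenDries}. Your direct attempt, however, has a genuine gap at the ``common refinement'' step. Cell decomposition refines within a \emph{fixed} coordinate ordering; it does not produce a partition of $\Gamma(f)$ into pieces that are simultaneously cells in $\R^{n+m}$ and (after the swap $\sigma$) cells in $\R^{m+n}$. Already your first half correctly gives $\dim A=\dim\Gamma(f)$ (since every cell contained in a graph has zeros in the last $m$ slots), and symmetrically $\dim B=\dim\sigma(\Gamma(f))$; what remains is exactly $\dim\Gamma(f)=\dim\sigma(\Gamma(f))$, i.e.\ invariance of dimension under coordinate permutation. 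That fact is not immediate from the cell-based definition of dimension (which is coordinate-sensitive) and is itself a Ch.4 result in \cite{vandenDries} (e.g.\ the characterization of $\dim$ via projections with open image). So the ``direct'' route is not actually avoiding the machinery you hoped to bypass; the clean move is the citation, as in the paper.
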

\begin{proof}
The claim $i)$ follows immediately from the definition, similarly $ii)$ by noting that $\Gamma(f\mid_C)=\Gamma(f)\cap \left( C\times f(A) \right)$, and $iii)$ is obvious.
For $iv)$ we refer to \cite[Ch.4, (1.3) Proposition (ii)]{vandenDries},
\end{proof}

\begin{definition}\label{p-parametrization}
Let $S \subseteq \R^n$ be a definable set of dimension $d>0$. 
Let $\P$ be a finite set of definable functions $\phi: (0,1)^d \rightarrow S$ such that 
$\bigcup_{\phi \in \P} \phi\left(( 0,1)^d\right)=S$. We call $\P$ a parametrization of $S$. Let $\alpha \in (\N\cup\{0\})^d$ be a multi index write $|\alpha|=\sum \alpha_i$ and, 
for $\phi=(\phi_1, \dots , \phi_n) \in \P$,
$$
\phi^{(\alpha)}= \left( \frac{\partial^{|\alpha|}\phi_1}{\partial^{\alpha_1} x_1 \cdots \partial^{\alpha_d} x_d  }  , \cdots , 
\frac{\partial^{|\alpha|}\phi_n}{\partial^{\alpha_1} x_1 \cdots \partial^{\alpha_d} x_d  } \right).
$$ 
We call $\P$ a $p$-parametrization if every $\phi \in \P$ is of class $C^{(p)}$ and has the property that $\phi^{(\alpha)}$ is bounded for each
$\alpha \in (\N\cup\{0\})^d$ with $|\alpha|\leq p$.

\end{definition}

\begin{theorem}[Pila, Wilkie]\label{PWpar}
For any $p\in \N$, and any bounded definable set $S$ of positive dimension, there exists a $p$-parametrization of $S$.
\end{theorem}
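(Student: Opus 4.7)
The plan is to prove this by induction on $d=\dim S$, combining a $C^p$-refinement of the cell decomposition theorem (Theorem \ref{celldecthm}) with a one-variable reparametrization lemma that tames derivatives via monomial substitutions.

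For the base case $d=1$, first apply the $C^p$-version of cell decomposition to partition $S$ into finitely many points (which contribute nothing) and one-dimensional $C^p$-cells. Since $S$ is bounded, each such cell is either a bounded open interval $(a,b)$ or the graph of a bounded $C^p$ definable function $f:(a,b)\to\R$. The interval case is handled by the affine map $t\mapsto a+(b-a)t$, whose derivatives are automatically bounded. For the graph case, the naive composition fails in general, since derivatives of $f$ may blow up near the endpoints (think of $f(x)=\sqrt{x}$ on $(0,1)$). The remedy is a one-variable lemma: by the monotonicity theorem, partition $(a,b)$ into finitely many subintervals on which each derivative $f^{(k)}$, $k\leq p$, is continuous and monotone, then on each piece compose with a monomial substitution $\sigma(t)=a'+(b'-a')t^N$ for a sufficiently large integer $N$. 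Since $f$ is bounded and definable, o-minimality forces each $f^{(k)}$ to have only controlled (essentially polynomial-rate) blow-up at the endpoints, so large enough $N$ absorbs this blow-up through the chain rule.

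For the inductive step ($d>1$), reduce again to a single $C^p$-cell, which is either the graph $\Gamma(f)$ of a $C^p$ function $f:X\to\R$ over a $d$-dimensional cell $X\subseteq\R^{n-1}$, or a band $(f,g)_X$ over a $(d{-}1)$-dimensional cell $X$. By the inductive hypothesis, $X$ admits a $p$-parametrization $\{\psi_i:(0,1)^{\dim X}\to X\}$. We then form candidate parametrizations of the cell: $u\mapsto(\psi_i(u),f(\psi_i(u)))$ in the graph case, and $(u,t)\mapsto(\psi_i(u),(1-t)f(\psi_i(u))+t\,g(\psi_i(u)))$ in the band case. The obstacle is again derivative blow-up, now of $f\circ\psi_i$ and $g\circ\psi_i$. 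This is addressed by applying the one-variable lemma fiber-by-fiber via definable Skolem functions to each relevant partial derivative of $f$ and $g$ up to order $p$, and then uniformizing over the base parameter using the fact that in an o-minimal structure the number of connected components of fibers of a definable family is uniformly bounded, so pointwise choices can be gathered into a finite partition.

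The main obstacle is controlling every $\phi^{(\alpha)}$ with $|\alpha|\leq p$ simultaneously after composition. Faà di Bruno's formula expresses these derivatives as sums of products of lower-order derivatives of the constituents, so one must refine the parametrization iteratively to make the smallness of the derivatives of $\psi_i$ compensate for any blow-up in derivatives of $f$ and $g$; the exponents $N$ in the monomial substitutions must be chosen in a carefully coordinated way, proceeding from highest order downward. This combinatorial and analytic bookkeeping, carried out definably and uniformly in the base parameter, is the technical heart of the Pila--Wilkie argument, and o-minimality is invoked at each stage to turn pointwise reparametrizations into finite, uniform ones.
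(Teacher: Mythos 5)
The paper's own proof of Theorem~\ref{PWpar} is a one-line citation to Pila and Wilkie's Theorem~2.3, so the relevant comparison is with the argument you are attempting to reconstruct.

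Your high-level skeleton---induction on dimension via cell decomposition, a one-variable re-parametrization lemma, definable Skolem functions together with uniform bounds on fibers to pass from pointwise to uniform choices, and Fa\`a di Bruno bookkeeping---is indeed the strategy of the Yomdin--Gromov--Pila--Wilkie argument. The gap is in the one-variable lemma, which you correctly identify as the technical heart but then fill with a mechanism that fails in the generality required. You assert that, $f$ being bounded and definable, each $f^{(k)}$ has only ``essentially polynomial-rate'' blow-up at the endpoints, so that composing with a monomial substitution $\sigma(t)=a'+(b'-a')t^N$ for $N$ large enough tames the derivatives. It is true that a bounded definable $C^\infty$ function $f$ on $(0,1)$ satisfies $f^{(k)}(x)=O(x^{-k})$ as $x\to 0^+$ (a standard Hardy-field/monotonicity estimate), but that is precisely the borderline rate at which power substitutions give nothing. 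Take for $\mathcal{S}$ the structure generated by $F_{\text{exp}}$---one of the paper's headline examples---and $f(x)=1/\log(1/x)$ on $(0,1/e)$. This is bounded, definable and $C^\infty$, with $f'(x)\sim 1/\bigl(x\log^2(1/x)\bigr)\to\infty$. For any $N\in\N$ and $c>0$,
\[
\frac{d}{dt}\,f(ct^N)=\frac{N}{\,t\bigl(\log(1/c)+N\log(1/t)\bigr)^2\,}\longrightarrow \infty \qquad (t\to 0^+),
\]
so no monomial substitution bounds even the first derivative. By contrast the definable substitution $g(t)=\exp(-1/t)$ gives $f\circ g(t)=t$. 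Hence the re-parametrizing substitution must be adapted to $f$ itself; it cannot be drawn from the fixed family of power maps $t\mapsto t^N$.

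Monomial substitutions are the right tool in Yomdin's original semialgebraic setting, where one-variable germs have Puiseux-type asymptotics, and more generally in polynomially bounded o-minimal structures. But the point of Pila and Wilkie's theorem is that it holds in every o-minimal structure, including the ones built from $F_{\text{exp}}$ and $F_{\text{an,exp}}$ that this paper invokes, and there the one-variable lemma (Pila--Wilkie's Lemma 3.1 and Proposition 3.2) rests on a different and more delicate scheme: on pieces where a derivative is too large one exchanges the function for its inverse, or re-parametrizes through a definable auxiliary built from $f$, iterating upward through the orders of differentiation. Without replacing your one-variable step by something of this kind, the induction breaks already at $d=1$ on examples like the one above.
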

\begin{proof}
This is a special case of \cite[Theorem 2.3]{Pila}. 
\end{proof}

Let $D\subseteq \R^n$ be nonempty. We say $f: D \rightarrow \R^m$ is a Lipschitz map if there exists a real constant $L$ such that 
$$
|f(x)-f(y)|\leq L|x-y| \text{ for all } x,y \in D.
$$
\begin{corollary}\label{lippar}
Let $S\subseteq \R^n$ be bounded and definable, and suppose $\dim S=d>0$.
Then $S$ can be parameterized by a finite number of Lipschitz maps $\phi: (0,1)^d \rightarrow S$.
\end{corollary}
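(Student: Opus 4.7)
The plan is to invoke the Pila--Wilkie parametrization theorem (Theorem \ref{PWpar}) with the smoothness parameter $p=1$, and then upgrade each $C^{(1)}$ map with bounded partial derivatives to a Lipschitz map by the standard mean value argument on the convex domain $(0,1)^d$.

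More precisely, I would first apply Theorem \ref{PWpar} to the bounded definable set $S$ with $p=1$ to obtain a finite collection $\P$ of definable maps $\phi=(\phi_1,\dots,\phi_n):(0,1)^d\to S$ such that $\bigcup_{\phi\in\P}\phi((0,1)^d)=S$, each $\phi$ is of class $C^{(1)}$, and every first-order partial derivative $\partial \phi_k/\partial x_i$ is bounded on $(0,1)^d$. Let $M$ be a common upper bound for the absolute values of all these partial derivatives over the finitely many $\phi\in\P$ and all $1\le k\le n$, $1\le i\le d$.

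Next, fix $\phi\in\P$ and two points $x,y\in(0,1)^d$. Since $(0,1)^d$ is convex, the segment $\gamma(t)=ty+(1-t)x$ with $t\in[0,1]$ lies in $(0,1)^d$, and $\phi\circ\gamma$ is $C^{(1)}$ on $[0,1]$. For each coordinate $k$, the fundamental theorem of calculus gives
\begin{equation*}
\phi_k(y)-\phi_k(x)=\int_0^1\sum_{i=1}^{d}\frac{\partial\phi_k}{\partial x_i}(\gamma(t))\,(y_i-x_i)\,dt,
\end{equation*}
so that $|\phi_k(y)-\phi_k(x)|\le M\sqrt{d}\,|y-x|$ by Cauchy--Schwarz. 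Squaring and summing over $k=1,\dots,n$ yields a Lipschitz bound $|\phi(y)-\phi(x)|\le M\sqrt{nd}\,|y-x|$, which is the required Lipschitz property with constant $L=M\sqrt{nd}$.

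There is essentially no obstacle here: the entire content is deferred to Theorem \ref{PWpar}, and the only remaining task is the elementary observation that a $C^{(1)}$ map with bounded partial derivatives on a convex set is Lipschitz. One minor point to note is that Definition \ref{p-parametrization} requires $\dim S>0$, which matches the hypothesis of the corollary, so Theorem \ref{PWpar} applies directly; the finitely many maps obtained then cover $S$ by construction, giving the desired Lipschitz parametrization.
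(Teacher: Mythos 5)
Your proposal is correct and follows essentially the same route as the paper: apply Theorem \ref{PWpar} with $p=1$ to get a finite cover by $C^{(1)}$ maps with uniformly bounded first partials, and then conclude Lipschitz from convexity of $(0,1)^d$. The paper simply cites a lemma of van den Dries for this last step, whereas you spell out the mean-value argument explicitly; both are fine.
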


\begin{proof}

By Theorem \ref{PWpar} any bounded definable set $S$ of dimension $d$ can be parameterized by a finite number of maps $\phi: (0,1)^d \rightarrow S$ with
uniformly bounded partial derivatives. 
This  implies the claim  (see also \cite[Ch.7, (2.8) Lemma]{vandenDries}).

\end{proof}

\begin{proposition}\cite[Ch.3, (3.5) Proposition]{vandenDries}\label{PropDecomposition}
Let $\pi:\IR^{m+n} \rightarrow \IR^m$ be the projection on the first $m$ coordinates. If $C$ is a cell in $\IR^{m+n} $ and $a \in \pi(C)$, then $C_a $ is a cell in $\IR^n$.
Moreover, if $\mathcal{D}$ is a decomposition of $\IR^{m+n}$ and $a \in \IR^m$ then the collection
$$
\mathcal{D}_a:=\{ C_a: C \in \mathcal{D}, a \in \pi(C) \}
$$
is a decomposition of $\IR^n$.
\end{proposition}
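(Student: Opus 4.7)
The plan is to prove both assertions by induction on the fiber-dimension $n$, with the parameter-dimension $m$ arbitrary throughout. I will first establish the cell assertion, then use it in the proof of the decomposition assertion.

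For the cell part, the base case $n=1$ is immediate from Definition \ref{Defcell}: any cell $C\subseteq\IR^{m+1}$ is either $\Gamma(f)$ or $(f,g)_X$ for some $(i_1,\ldots,i_m)$-cell $X\subseteq\IR^m$, and for $a\in\pi(C)=X$ the fiber $C_a$ is either the singleton $\{f(a)\}$ (a $(0)$-cell) or the nonempty open interval $(f(a),g(a))$ (a $(1)$-cell). For the inductive step from $n$ to $n+1$, let $C\subseteq\IR^{m+n+1}$ be a cell and write $C=\Gamma(f)$ or $C=(f,g)_X$ over an $(i_1,\ldots,i_{m+n})$-cell $X\subseteq\IR^{m+n}$. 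Since $\pi_m(C)=\pi_m(X)$, the induction hypothesis applied to $X$ gives that $X_a$ is a cell in $\IR^n$. The restrictions $f_a(\cdot):=f(a,\cdot)$ and, where applicable, $g_a(\cdot):=g(a,\cdot)$ have graphs $\Gamma(f)_a$ and $\Gamma(g)_a$, which are definable by Lemma \ref{basicfactsdefsets}\,(vi), inherit continuity (in the extended sense, if $f$ or $g$ equals $\pm\infty$) from the originals, and satisfy $f_a<g_a$ on $X_a$. Hence $C_a$ equals $\Gamma(f_a)$ or $(f_a,g_a)_{X_a}$, a cell in $\IR^{n+1}$ of the appropriate type.

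For the decomposition part, I induct on $n$ again. The base $n=1$: each element of $\mathcal{D}_a$ is a cell in $\IR$ by the cell part; $\mathcal{D}_a$ inherits finiteness, pairwise disjointness and exhaustiveness from $\mathcal{D}$; and any finite partition of $\IR$ into points and open intervals is automatically of the form demanded in the definition of a decomposition of $\IR$. For the step from $n$ to $n+1$, let $\pi^{\sharp}:\IR^{m+n+1}\to\IR^{m+n}$ drop the last coordinate. By the definition of decomposition, $\pi^{\sharp}(\mathcal{D})$ is a decomposition of $\IR^{m+n}$, and by the inductive hypothesis $\pi^{\sharp}(\mathcal{D})_a$ is a decomposition of $\IR^n$. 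The elementary identity $\pi_n(C_a)=\pi^{\sharp}(C)_a$, where $\pi_n:\IR^{n+1}\to\IR^n$ drops the last coordinate, then shows that $\{\pi_n(D):D\in\mathcal{D}_a\}=\pi^{\sharp}(\mathcal{D})_a$; combined with the cell part and the observation that $\mathcal{D}_a$ is a finite partition of $\IR^{n+1}$, this yields that $\mathcal{D}_a$ is a decomposition of $\IR^{n+1}$.

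The main bookkeeping step will be the identity $\pi_n(C_a)=\pi^{\sharp}(C)_a$, which converts a projection-of-a-fiber into a fiber-of-a-projection and is precisely what allows the inductive hypothesis on the decomposition of $\IR^{m+n}$ to be invoked at the next level. All other steps are routine unpacking of the definitions of cell and decomposition, together with the basic closure properties of definable sets collected in Lemma \ref{basicfactsdefsets}.
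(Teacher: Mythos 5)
Your proof is correct. The paper does not supply its own argument here but simply cites \cite[Ch.3, (3.5) Proposition]{vandenDries}, and your double induction on the fiber dimension $n$ --- fibering $\Gamma(f)$ and $(f,g)_X$ for the cell part, and using the identity $\pi_n(C_a)=\pi^{\sharp}(C)_a$ to push the decomposition hypothesis down a level --- is exactly the standard proof found in that reference.
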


\begin{corollary}\label{corollconncomp}

Let $S \subseteq \IR^{m+n}$ be a definable family. Then there exists a number $M_S \in \IN$ such that for each $a \in \IR^m$ the set $S_a \subseteq \IR^n$ can be partitioned into at 
most $M_S$ cells. In particular, each fiber $S_a$ has at most $M_S$ connected components.

\end{corollary}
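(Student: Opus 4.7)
The plan is to derive this result as a direct consequence of the cell decomposition theorem (Theorem \ref{celldecthm}) together with its fiber-wise compatibility recorded in Proposition \ref{PropDecomposition}. The key observation is that one should choose the decomposition once, based on the whole family $S \subseteq \IR^{m+n}$, so that its slices yield a uniform bound across all parameters $a \in \IR^m$.

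First I would apply Theorem \ref{celldecthm} to the definable set $S$ to obtain a finite decomposition $\mathcal{D}$ of $\IR^{m+n}$ partitioning $S$. I then define $M_S$ to be the number of cells $C \in \mathcal{D}$ satisfying $C \subseteq S$; this count depends only on $\mathcal{D}$, and hence only on $S$, not on any $a$. Next, for fixed $a$, I would invoke Proposition \ref{PropDecomposition} to conclude that $\mathcal{D}_a = \{C_a : C \in \mathcal{D}, \, a \in \pi(C)\}$ is a decomposition of $\IR^n$. Since $\mathcal{D}$ partitions $S$, every $C \in \mathcal{D}$ is either contained in $S$ or disjoint from $S$, and slicing at $a$ preserves this dichotomy: $C_a$ is either contained in $S_a$ or disjoint from $S_a$. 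Therefore the subfamily $\{C_a : C \in \mathcal{D}, \, C \subseteq S, \, a \in \pi(C)\}$ is a partition of $S_a$ into at most $M_S$ cells of $\IR^n$, giving the first assertion.

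For the ``in particular'' clause I would invoke Lemma \ref{conlem}: each cell is connected in the usual topological sense, so every connected component of $S_a$ must be a union of cells in the partition constructed above. Consequently the number of connected components of $S_a$ is bounded by the number of cells, hence by $M_S$.

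The main obstacle is essentially nothing beyond correctly packaging the preceding results; the only genuine point to verify is that $M_S$ is independent of $a$, which is immediate because the decomposition $\mathcal{D}$ is chosen once from the definable set $S$ alone and does not depend on any particular parameter. No new o-minimality input is needed beyond what has already been cited.
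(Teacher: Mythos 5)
Your proof is correct and follows essentially the same route as the paper: apply the cell decomposition theorem once to $S$, slice via Proposition \ref{PropDecomposition}, and invoke Lemma \ref{conlem} for connectedness. The only difference is that you count only the cells contained in $S$ rather than taking $M_S=|\mathcal{D}|$ as the paper does, a minor (and slightly sharper) refinement.
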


\begin{proof}
By the cell decomposition theorem there exists a decomposition $\mathcal{D}$ of $\IR^{m+n}$ partitioning $S$. Then for each $a \in \IR^m$ the decomposition $\mathcal{D}_a$ 
of $\IR^n$ consists of at most $|\mathcal{D}|$ cells and partitions $S_a$. So we can take $M_S=|\mathcal{D}|$. The last statement follows from Lemma \ref{conlem}.
\end{proof}
Another important property of o-minimal structures is the possibility of ``lifting'' projections. In model-theoretic terms this might be rephrased as
existence of definable Skolem functions.

\begin{proposition}\cite[Ch.6, (1.2) Proposition]{vandenDries}\label{skolem}
If $S\subseteq \R^{m+n}$ is definable and $\pi: \R^{m+n} \rightarrow \R^m$ is the projection on the first $m$ coordinates, then there is a definable map $f:\pi(S)\rightarrow \R^n$ 
such that $\Gamma(f)\subseteq S$. 
\end{proposition}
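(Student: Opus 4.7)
\medskip

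\noindent\textbf{Proof plan.} The plan is to proceed by induction on $n$, using the cell decomposition theorem (Theorem \ref{celldecthm}) together with Proposition \ref{PropDecomposition} to produce the section $f$.

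For the base case $n=1$, given $S\subseteq\R^{m+1}$, I would invoke the cell decomposition theorem to obtain a decomposition $\mathcal{D}$ of $\R^{m+1}$ partitioning $S$. By the very definition of a decomposition, $\{\pi(C):C\in\mathcal{D}\}$ is a decomposition of $\R^m$; in particular the projections $\pi(C)$ with $C\subseteq S$ are pairwise disjoint and their union is $\pi(S)$, so they yield a partition of $\pi(S)$ into finitely many cells $D_1,\ldots,D_k$. For each $D_i$, pick any cell $C_i\in\mathcal{D}$ with $C_i\subseteq S$ and $\pi(C_i)=D_i$. By Definition \ref{Defcell} the cell $C_i$ is either of the form $\Gamma(g_i)$ for some $g_i\in C(D_i)$, in which case I set $f|_{D_i}:=g_i$, or of the form $(g_i,h_i)_{D_i}$ with $g_i,h_i\in C_\infty(D_i)$ and $g_i<h_i$, in which case I define $f|_{D_i}$ by a case distinction depending on which of $g_i$, $h_i$ are finite: take $(g_i+h_i)/2$ if both are finite, $g_i+1$ if $h_i\equiv\infty$ and $g_i$ is finite, $h_i-1$ if $g_i\equiv-\infty$ and $h_i$ is finite, and $0$ if both are infinite. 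Each resulting $f|_{D_i}$ is a continuous definable function with graph inside $C_i\subseteq S$. Gluing these finitely many pieces, the graph $\Gamma(f)=\bigcup_i \Gamma(f|_{D_i})$ is a finite union of definable sets, hence definable, and $\Gamma(f)\subseteq S$ by construction.

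For the inductive step, suppose the statement holds for a given $n\geq 1$ and let $S\subseteq\R^{m+n+1}$ be definable. Write $\pi_1:\R^{m+n+1}\to\R^{m+n}$ and $\pi_2:\R^{m+n}\to\R^m$ for the projections onto the initial coordinates, so that $\pi=\pi_2\circ\pi_1$. Applied to $S$ viewed as a subset of $\R^{(m+n)+1}$, the base case yields a definable map $g:\pi_1(S)\to\R$ with $(y,g(y))\in S$ for every $y\in\pi_1(S)$. The set $\pi_1(S)\subseteq\R^{m+n}$ is definable by Lemma \ref{basicfactsdefsets}, so applying the inductive hypothesis to $\pi_1(S)$ with respect to $\pi_2$ produces a definable map $h:\pi(S)\to\R^n$ with $(x,h(x))\in\pi_1(S)$ for every $x\in\pi(S)$. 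I then define $f:\pi(S)\to\R^{n+1}$ by
\[
f(x):=\bigl(h(x),\,g(x,h(x))\bigr).
\]
By construction $(x,f(x))\in S$, and $f$ is definable because its graph is obtained from the definable sets $\Gamma(g)$, $\Gamma(h)$ by a finite combination of Cartesian products, intersections with the diagonal-type sets of Lemma \ref{basicfactsdefsets}, and projections.

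The main obstacle is the base case, and within it the correct handling of the open-interval cells $(g_i,h_i)_{D_i}$ whose bounding functions may be infinite; this is why the explicit case distinction above is needed to guarantee a continuous definable section exists on each $D_i$. Once the base case is settled, the inductive step is essentially bookkeeping with the definability closure properties already listed in Lemma \ref{basicfactsdefsets}.
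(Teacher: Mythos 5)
The paper does not prove this proposition but simply cites \cite[Ch.6, (1.2) Proposition]{vandenDries}; your argument — induction on $n$, with the base case handled by a cell decomposition of $\R^{m+1}$ and an explicit piecewise choice of section over each base cell (midpoint/shift rules for the four types of band) — is correct and is essentially the standard proof found in the cited reference. One small imprecision: the projections $\pi(C)$ of the cells $C\subseteq S$ are not literally pairwise disjoint (several $C$'s can sit above the same base cell); they are pairwise disjoint or equal, which is what you actually use when you subsequently ``pick any'' $C_i$ over each $D_i$.
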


The proof of \cite[Ch.6, (1.2) Proposition]{vandenDries} actually shows that there is an algorithmic way to construct the Skolem function $f$.
The construction of $f$ is of no importance for us but we will use the fact that this choice of $f$ is determined by $S$ and $\pi$.

We write $\cl(A)$ and $\inte(A)$ for the the topological closure and the interior of the set $A$ respectively.  
Also recall that $\bd(A)$ denotes the topological boundary of $A$.

\begin{lemma}\label{collintfibers}
Suppose $Z\subseteq \R^{m+n}$ is definable. Then 
$\{(T,x): x\in \inte(Z_T)\}$, $\{(T,x): x\in \cl(Z_T)\}$, and $\{(T,x): x\in \bd(Z_T)\}$ are definable.  
\end{lemma}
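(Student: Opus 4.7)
The plan is to show that each of the three sets is obtained from $Z$ and from suitable semialgebraic sets by the operations the definition of an o-minimal structure is closed under: Boolean combinations and projections, as collected in Lemma \ref{basicfactsdefsets}. The only real subtlety is that the topological notions of interior and closure are naturally expressed with a universal quantifier, while o-minimality gives us closure under projection (the existential quantifier) only; I would therefore convert each ``for all'' into ``not exists not'' via complementation.

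For the closure I introduce the auxiliary set
\[
E = \{(T,x,\varepsilon,y)\in \R^{m+n+1+n} : (T,y)\in Z,\ \varepsilon > 0,\ |x-y|^2 < \varepsilon^2\},
\]
which is definable as the intersection of a definable cylinder over $Z$ with a semialgebraic set. Projecting out the $y$-block gives the definable set of triples $(T,x,\varepsilon)$ with $\varepsilon > 0$ for which the open ball $B(x,\varepsilon)$ meets $Z_T$. Its complement inside the definable set $\{(T,x,\varepsilon) : \varepsilon > 0\}$ consists of the triples with $B(x,\varepsilon)\cap Z_T = \emptyset$; projecting out $\varepsilon$ then yields $\{(T,x) : x\notin \cl(Z_T)\}$, and one further complement in $\R^{m+n}$ shows that $\{(T,x) : x\in \cl(Z_T)\}$ is definable.

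For the interior I apply the same construction to the definable complement $Z' = \R^{m+n}\setminus Z$ (definable by clause (1) of Definition \ref{defomin}) and use the identity $\inte(Z_T) = \R^n \setminus \cl((\R^{m+n}\setminus Z)_T)$, noting that $(\R^{m+n}\setminus Z)_T = \R^n \setminus Z_T$. The boundary set is then the Boolean difference of the two definable sets just obtained. I do not anticipate any substantial obstacle here: the whole argument rests on the observation that the first-order formulas describing closure, interior and boundary involve only membership in $Z$, polynomial inequalities in $(x,y,\varepsilon)$, and the operations of projection and complement, all of which preserve definability in our o-minimal structure. The only point requiring a modicum of care is to remember to introduce the complementation step before projecting, so that the universal quantifier is dealt with correctly.
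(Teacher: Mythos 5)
Your argument is correct, and it takes a route that is complementary to the one in the paper. The paper's proof is almost entirely by citation: it invokes \cite[Ch.1, (3.7) Exercise (ii)]{vandenDries} for the definability of $\{(T,x): x\in \inte(Z_T)\}$, then passes to the closure via the identity $\cl(Z_T)=\R^n\setminus\inte(\R^n\setminus Z_T)$ together with $(\R^{m+n}\setminus Z)_T=\R^n\setminus Z_T$, and finally obtains the boundary as the set difference of the closure and interior families. You instead give a self-contained, constructive argument from the axioms of Definition~\ref{defomin}: you build the auxiliary definable set $E$ encoding the first-order formula ``$B(x,\varepsilon)$ meets $Z_T$,'' eliminate the existential quantifier over $y$ by projection, handle the universal quantifier over $\varepsilon$ by the standard ``complement--project--complement'' trick, and then derive the interior from the closure (rather than the other way around) by the same Boolean identity, with the boundary again as a difference. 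In effect you are reproving the cited exercise rather than citing it; the paper's route is shorter on the page, while yours is more elementary and makes visible exactly which closure properties of an o-minimal structure — Boolean operations, cylinders, coordinate permutations, projections, and containment of semialgebraic sets — are being used. Both proofs are valid and rest on the same underlying observation, namely that the $\varepsilon$--$\delta$ definitions of interior and closure are first-order over the structure once one is allowed parameters from $Z$ and semialgebraic predicates.
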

\begin{proof}
The first statement is \cite[Ch.1, (3.7) Exercise (ii)]{vandenDries}. For the second set note that $x\in \cl(Z_T)$ is equivalent to 
$x\notin \inte(\R^n\backslash Z_T)$, and, moreover, $\R^n\backslash Z_T=(\R^{m+n}\backslash Z)_T$. Hence, 
$\{(T,x): x\in \cl(Z_T)\}=\R^{m+n}\backslash \{(T,x): x\in \inte((\R^{m+n}\backslash Z)_T)\}$, which is definable by our first statement.
Finally, as $\{(T,x): x\in \bd(Z_T)\}=\{(T,x): x\in \cl(Z_T)\}\backslash\{(T,x): x\in \inte(Z_T)\}$ we get the last statement.
\end{proof}

\section{The Davenport constant}\label{Davenportconstant}
If $D\subseteq \R^n$ satisfies the conditions (1) and (2) in Theorem \ref{thmDav} then we say $h$ is a Davenport constant for $D$.
Of course, this has nothing to do with the classical Davenport constant of a finite abelian group.
\begin{lemma} \label{lemdav}

Let $Z \subseteq \IR^{m+n}$ be a definable family. There exists a natural number $M=M_Z$, depending only on $Z$, such that  
for every $T \in \IR^m$ and every endomorphism $\Ppsi$ of $\IR^n$ the number $M$ is a Davenport constant for $\Ppsi(Z_T)$.

\end{lemma}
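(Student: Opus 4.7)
The plan is to realise each one--dimensional slice of a projection of $\Psi(Z_T)$ that enters Davenport's conditions as a fibre of a single definable family, indexed by $(\Psi,T,\text{choice of line})$, and then invoke Corollary \ref{corollconncomp} to get a uniform bound on the number of connected components of such a slice.

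First, form the definable set
$$
W \;=\; \{(\Psi,T,y)\in \R^{n^2}\times\R^m\times\R^n : y=\Psi(x)\text{ for some }x\in Z_T\},
$$
whose definability follows from that of $Z$, the semialgebraicity of the evaluation graph in (\ref{exalgset}), and the closure properties of $\mathcal{S}$ (Lemma \ref{basicfactsdefsets}). The fibre $W_{(\Psi,T)}$ is exactly $\Psi(Z_T)$.

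Next, for each pair $(I,i)$ with $\emptyset\neq I\subseteq\{1,\dots,n\}$ and $i\in I$, write $I\setminus\{i\}=\{k_1<\cdots<k_{j-1}\}$ with $j=|I|$, and consider the definable family
$$
W^{(I,i)}=\bigl\{(\Psi,T,q,t)\in\R^{n^2+m+j-1+1}: \exists y\in W_{(\Psi,T)},\ y_i=t,\ y_{k_\ell}=q_\ell \text{ for }\ell=1,\dots,j-1\bigr\},
$$
obtained from $W$ by intersecting with an affine--coordinate condition and projecting out the irrelevant coordinates. By construction, the fibre of $W^{(I,i)}$ over $(\Psi,T,q)$ equals the intersection of the orthogonal projection of $\Psi(Z_T)$ onto $\mathrm{span}(e_k:k\in I)$ with the line through $\sum_\ell q_\ell e_{k_\ell}$ parallel to $e_i$, i.e. precisely the kind of slice that Davenport's conditions (1) (when $|I|=n$) and (2) (when $|I|<n$) address.

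Now apply Corollary \ref{corollconncomp} to each $W^{(I,i)}$: since $W^{(I,i)}$ is a definable set depending only on $Z$ and the combinatorial data $(I,i)$, there is a number $M^{(I,i)}\in\N$ such that every fibre can be partitioned into at most $M^{(I,i)}$ cells. As each such fibre is a definable subset of $\R$, by axiom (6) it is a finite union of intervals and points, whose total count is bounded by the cell count. Setting $M=M_Z:=\max_{(I,i)}M^{(I,i)}$ over the finitely many admissible pairs yields the desired uniform Davenport constant. The main obstacle is organisational: one must bundle the quantifiers over $\Psi$ and $T$ and the choice of line all into the fibre parametrisation of a \emph{single} definable family, so that Corollary \ref{corollconncomp} produces a bound independent of $(\Psi,T)$.
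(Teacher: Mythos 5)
Your proof is correct and takes essentially the same approach as the paper: both construct a single definable family (your $W^{(I,i)}$, the paper's $B^{I,(i_0)}$) indexed by $(\Psi,T)$ together with the line data, and then invoke Corollary \ref{corollconncomp} to get a uniform bound on the number of connected components, finally taking the maximum over the finitely many pairs $(I,i)$. The only cosmetic difference is that you project all the way down to $\R$ (so the fibre is literally a subset of $\R$ and axiom (6) applies directly), whereas the paper keeps the fibre as a $1$-dimensional definable subset of $\R^n$; this changes nothing substantive.
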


\begin{proof}

Let $I$ be a nonempty subset of $\{1, \ldots ,n\}$ and let $\pi_{C_I}$ be the orthogonal projection of $\IR^n$ on the coordinate subspace $C_I$ generated by the ${e}_i$, $i \in I$. 
Recall the notation of (\ref{exalgset}) in Section \ref{o-minimalstructures} and let $W$ be the set
\begin{equation}\label{defW}
  W=\left\{  (\Ppsi,T,x) \in \IR^{n^2+m+n}:   x \in \Ppsi(Z_T) \right\}.
\end{equation}
Note that, up to a coordinate permutation, $W$ is the projection to the first $n^2+m+n$ coordinates of the definable set $\left\lbrace \left( \Ppsi,x,T,y\right) \in \IR^{n^2+n+m+n}:   x=\Ppsi(y)\right\rbrace \cap \left(\R^{n^2+n}\times Z \right)$. By Lemma \ref{basicfactsdefsets} and the fact that semialgebraic sets are definable, this is a definable set. Moreover, note that
$$
W_{(\Ppsi,T)}=\Ppsi(Z_T).
$$

Let us set some notation we need. We indicate by $\pi'_{C_I}$ the endomorphism of $\R^{n^2+m+n}$ defined by $(\Ppsi ,T,x)\mapsto (\Ppsi, T, \pi_{C_I}(x))$. 
A line in $C_I$ parallel to ${e}_{i_0}$ is determined by $|I|-1$ reals and therefore we indicate it by $(l_i)_{i\in I\setminus \{i_0 \}}$.

Let $I \subseteq \{ 1,\ldots ,n \}$ be nonempty and $i_0 \in I$, we consider the sets
\begin{align*}
B^{I,(i_0)}= \left\{  \left((l_i)_{i\in I\setminus \{i_0 \}} ,\Ppsi,T,x \right) \in \IR^{|I|-1} \times\IR^{n^2+m+n}: \right. \\
\left. \vphantom{ \left((l_i)_{i\in I\setminus \{i_0 \}} ,\Ppsi,T,x \right) \in \IR^{|I|-1} \times\IR^{n^2+m+n}:} (\Ppsi,T,x) \in \pi'_{C_I}(W), l_i=x_i \text{ for }  i\in I\setminus \{i_0 \} \right\}.
\end{align*}
Again by elementary properties mentioned in Section \ref{o-minimalstructures}, these are definable sets . A fiber $B^{I,(i_0)}_{\left((l_i) ,\Ppsi,T\right)}$ is exactly the intersection 
of $\pi'_{C_I}(W)_{(\Ppsi,T)}=\pi_{C_I}(W_{(\Ppsi,T)})=  \pi_{C_I}(\Ppsi(Z_{T}))$ and the line $(l_i)_{i\in I\setminus \{i_0 \}} $ parallel to ${e}_{i_0}$ in the subspace $C_I$.

Now we use Corollary \ref{corollconncomp} to find a uniform bound $M^{I,(i_0)}$ for the number of connected components of the fibers $B^{I,(i_0)}_{\left((l_i) ,\Ppsi,T\right)}$ of $B^{I,(i_0)}$. This means that $M^{I,(i_0)}$ is a bound on the number of connected components of the intersection of $\pi_{C_I}(\Ppsi(Z_{T}))$ with any line of $C_I$ parallel to $e_{i_0}$, for any choice of $\Ppsi$ and $T$.
Finally, we can take $M$ to be the maximum of the $M^{I,(i_0)}$ for all the possible choices of $I$ and $i_0 \in I$.

\end{proof}


\section{Hausdorff measure and rectifiability}\label{sect_measuretheoretic_prelim}

We also require the $j$-Hausdorff measure $\H^j$. For the definition and properties of the Hausdorff measure we refer to \cite{Federer} or \cite{Morgan}.

\begin{lemma}\label{lemhaus}
Suppose $1\leq j\leq n$, $A\subseteq \R^n$ and suppose $A$ is $j$-Hausdorff measurable.
Furthermore, let $\varphi:\R^n\rightarrow \R^n$ be an endomorphism.  
Then $\H^j(\varphi(A))\leq \|\varphi\|_{op}^j\H^j(A)$. Moreover, if $\varphi$ is an orthogonal projection we have $\H^j(\varphi(A))\leq \H^j(A)$. If $\varphi$ is 
in the orthogonal group $O_n(\R)$ then we have $\H^j(\varphi(A))=\H^j(A)$. 
\end{lemma}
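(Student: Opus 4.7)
The plan is to reduce everything to the standard behavior of Hausdorff measure under Lipschitz maps, then specialize.

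First I would recall the definition: for $\delta>0$, $\H^j_\delta(A)$ is the infimum, over all countable covers $\{U_i\}$ of $A$ by sets of diameter at most $\delta$, of the sum $c_j\sum_i \diam(U_i)^j$ (where $c_j$ is the standard normalization constant), and $\H^j(A)=\lim_{\delta\to 0}\H^j_\delta(A)$. The key observation is that any endomorphism $\varphi$ of $\R^n$ is Lipschitz with Lipschitz constant $\|\varphi\|_{op}$, i.e.\ $|\varphi(x)-\varphi(y)|\leq\|\varphi\|_{op}|x-y|$ for all $x,y\in\R^n$, so for every set $U\subseteq\R^n$ one has $\diam(\varphi(U))\leq\|\varphi\|_{op}\diam(U)$.

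Next, given any cover $\{U_i\}$ of $A$ by sets of diameter at most $\delta$, the collection $\{\varphi(U_i)\}$ covers $\varphi(A)$ and consists of sets of diameter at most $\|\varphi\|_{op}\delta$. Therefore
\begin{equation*}
\H^j_{\|\varphi\|_{op}\delta}(\varphi(A))\leq c_j\sum_i\diam(\varphi(U_i))^j\leq \|\varphi\|_{op}^j\,c_j\sum_i\diam(U_i)^j.
\end{equation*}
Taking the infimum over covers and letting $\delta\to 0$ yields the first inequality $\H^j(\varphi(A))\leq \|\varphi\|_{op}^j\H^j(A)$.

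For the second assertion, an orthogonal projection $\varphi$ onto a linear subspace has operator norm exactly $1$ (it is a contraction), so the first inequality immediately gives $\H^j(\varphi(A))\leq \H^j(A)$. For the third assertion, if $\varphi\in O_n(\R)$ then $\|\varphi\|_{op}=1$, hence $\H^j(\varphi(A))\leq\H^j(A)$. Since $\varphi^{-1}=\varphi^T\in O_n(\R)$ as well, applying the same inequality to $\varphi^{-1}$ and to the set $\varphi(A)$ gives $\H^j(A)=\H^j(\varphi^{-1}(\varphi(A)))\leq\H^j(\varphi(A))$, so equality holds. No step here is truly an obstacle; the only subtle point to be aware of is the trivial verification that $\diam(\varphi(U))\leq\|\varphi\|_{op}\diam(U)$ for an endomorphism, which follows directly from the definition of operator norm on $(\R^n,|\cdot|)$.
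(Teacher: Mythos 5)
Your proof is correct and follows essentially the same route as the paper's: reduce to the fact that a Lipschitz map with constant $L$ scales $\H^j$ by at most $L^j$, note that orthogonal projections and orthogonal maps have operator norm $1$, and for $\varphi\in O_n(\R)$ apply the inequality to $\varphi^{-1}$ as well. The only difference is that you prove the Lipschitz scaling property of Hausdorff measure directly from the definition via diameter estimates, whereas the paper simply cites it from Evans--Gariepy [2.4.1 Theorem 1].
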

\begin{proof}
The first claim follows from \cite[2.4.1 Theorem 1]{Evans}. If $\varphi$ is in $O_n(\R)$ or if $\varphi$ is an orthogonal projection then  $\|\varphi\|_{op}=1$. If 
$\varphi\in O_n(\R)$ then also 
$\varphi^{-1}\in O_n(\R)$, and we apply the previous with $\varphi^{-1}$ and $\varphi(A)$.
\end{proof}

\begin{proposition}\label{defdimhausdorffdim}
Suppose $A \subseteq \R^n$ is nonempty and definable. Then $\dim A$ coincides with the Hausdorff dimension. Moreover,  
if $\dim A=d$ and $A$ is bounded, then $A$ is $j$-Hausdorff measurable for every $j$ with $ d\leq j\leq n$. Finally, $\H^d(A)< \infty$ and $\H^j(A)=0$ for $j>d$. 
\end{proposition}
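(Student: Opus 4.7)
The plan is to reduce to cells via the cell decomposition theorem (Theorem \ref{celldecthm}) and then play two things against each other: the Pila--Wilkie Lipschitz parametrization (Corollary \ref{lippar}) will bound the Hausdorff dimension of each bounded cell from above, and the inductive structure of cells will give a matching lower bound by exhibiting each cell as a $1$-Lipschitz injective image of an open subset of $\R^{d_C}$ of the correct dimension $d_C$.

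First I would apply Theorem \ref{celldecthm} to partition $A$ into finitely many cells $C_1,\ldots,C_N$, with $\dim A=\max_k\dim C_k$. A short induction on the cell construction shows that every cell is Borel (a $(0)$-cell is closed, a $(1)$-cell is open, and graphs of continuous functions, or open-interval fibrations, over Borel cells are again Borel), so every definable set is Borel and hence $\H^j$-measurable for every $j$. This settles the measurability assertion once and for all.

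Next I would treat a single bounded cell $C$ of type $(i_1,\ldots,i_n)$ and dimension $d_C=\sum i_k$. Setting $J=\{k:i_k=1\}$, the orthogonal projection $\pi_J:\R^n\to\R^J$ is, by induction on $n$ using the cell definition, injective on $C$ and sends $C$ onto a $(1,\ldots,1)$-cell in $\R^{d_C}$; such a cell is easily seen (again by induction) to be a nonempty open subset of $\R^{d_C}$. Lemma \ref{lemhaus} applied to the $1$-Lipschitz map $\pi_J$ then yields $\H^{d_C}(C)\geq \H^{d_C}(\pi_J(C))>0$, so the Hausdorff dimension of $C$ is at least $d_C$. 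Conversely, Corollary \ref{lippar} produces finitely many Lipschitz maps $\phi:(0,1)^{d_C}\to C$ whose images cover $C$, and the standard Lipschitz estimate $\H^j(\phi(E))\leq L^j\H^j(E)$ gives $\H^{d_C}(C)<\infty$ and $\H^j(C)=0$ for $j>d_C$ (the case $d_C=0$ being trivial, since then $C$ is a point). Summing these estimates over the cells $C_k$ in the decomposition of a bounded $A$ yields $\H^d(A)<\infty$, $\H^j(A)=0$ for $j>d$, and therefore the Hausdorff dimension of $A$ equals $d=\dim A$.

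For unbounded $A$, I would exhaust $A$ by the bounded definable sets $A\cap B_0(\rho)$: countable additivity yields $\H^j(A)=0$ for $j>d$, and monotonicity combined with the bounded case gives Hausdorff dimension at least $\dim(A\cap B_0(\rho))=d$ once $\rho$ is large enough for $B_0(\rho)$ to intersect some top-dimensional cell of $A$ in a relatively open piece. The main obstacle is the lower bound $\H^{d_C}(C)>0$, i.e., the assertion that $\pi_J(C)$ is a nonempty open subset of $\R^{d_C}$; once the cell structure is unwound this is a routine induction, but it is the only place where one must look inside the cell definition rather than merely invoke the Lipschitz parametrization.
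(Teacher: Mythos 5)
Your proof is correct and represents a genuinely different route from the paper: the paper does not argue this proposition at all but simply cites the last paragraph of p.~177 of van den Dries's \emph{Limit sets in o-minimal structures} for the agreement of the two dimensions and the finiteness/measurability statements, and notes that $\H^j(A)=0$ for $j>d$ is formal from the definition. You instead re-derive the result from the tools the paper already assembles: cell decomposition reduces to bounded cells, the Pila--Wilkie reparametrization (Corollary~\ref{lippar}) together with the Lipschitz--Hausdorff estimate gives the upper bound $\H^{d_C}(C)<\infty$ and $\H^j(C)=0$ for $j>d_C$, and the coordinate projection onto the ``open'' indices of a cell gives the matching lower bound $\H^{d_C}(C)>0$. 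That last ingredient is the nonobvious one, and your inductive treatment of it is sound; note it is in fact recorded as \cite[Ch.3, (2.7) Proposition]{vandenDries}, which states that the projection onto the coordinates $\{k : i_k=1\}$ maps an $(i_1,\ldots,i_n)$-cell homeomorphically onto an open cell, so you could invoke it directly rather than re-prove it. What your approach buys is self-containment: everything needed is already in the paper (plus the standard fact that $n$-Hausdorff measure is Lebesgue measure, which the paper also records); what it costs is using the rather heavy Pila--Wilkie machinery where a $C^1$-cell decomposition would do, though since the paper needs Corollary~\ref{lippar} later anyway this is a reasonable economy.

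Two small technical points worth tightening. First, Lemma~\ref{lemhaus} as stated applies to endomorphisms $\varphi:\R^n\to\R^n$, whereas you apply it to $\pi_J:\R^n\to\R^{d_C}$; this is harmless (compose the orthogonal projection onto the coordinate subspace $C_J\subseteq\R^n$ with the isometry $C_J\cong\R^{d_C}$), but should be said. Second, the estimate $\H^j(\phi(E))\le L^j\H^j(E)$ you use for the Lipschitz parametrizations $\phi$ is correct and standard (it is what \cite[2.4.1 Theorem 1]{Evans} actually proves), but it goes beyond what the paper's Lemma~\ref{lemhaus} literally asserts, which is restricted to linear maps; you should cite the Lipschitz version explicitly rather than the paper's lemma. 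With these clarifications the argument is complete, including the unbounded case via exhaustion by $A\cap B_0(\rho)$.
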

\begin{proof}
See \cite[last paragraph on p.177]{vandenDriesLimitsets}. The last claim follows from the definition of Hausdorff dimension.
\end{proof}

It is well known that on $\R^n$ the $n$-Hausdorff measure coincides with the Lebesgue measure (see \cite[2.8. Corollary]{Morgan}). This, 
together with Proposition \ref{defdimhausdorffdim}, implies that a definable set in $\R^n$ of dimension $<n$ has volume zero. 
Also recall that any bounded set that is open or closed is measurable and has finite volume. 

\begin{lemma}\label{measdef}
Let $A\subseteq \R^n$ be a bounded definable set. Then $\Vol(\bd(A))=0$. In particular, $A$ is measurable and $\Vol(\inte(A))=\Vol(A)=\Vol(\cl(A))$.
\end{lemma}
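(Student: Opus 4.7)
The plan is to reduce everything to the volume statement $\Vol(\bd(A))=0$, from which the measurability of $A$ and the equalities $\Vol(\inte(A))=\Vol(A)=\Vol(\cl(A))$ will follow by a soft argument. First I would observe that $\bd(A)$ is definable (by Lemma \ref{collintfibers}, regarding $A$ as a trivial family, or equivalently since closure and interior preserve definability on $\R^n$) and bounded, since $\bd(A)\subseteq \cl(A)$ and $\cl(A)$ is compact. If I can show $\dim\bd(A)<n$, then Proposition \ref{defdimhausdorffdim} gives $\H^n(\bd(A))=0$, which equals $\Vol(\bd(A))$ because the $n$-Hausdorff measure and Lebesgue measure coincide on $\R^n$.

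To bound the dimension of $\bd(A)$ I would invoke cell decomposition (Theorem \ref{celldecthm}) to produce a finite partition $\R^n=C_1\cup\cdots\cup C_k$ into cells such that each $C_i$ is either entirely contained in $A$ or entirely contained in $\R^n\setminus A$. A short induction on $n$ starting from Definition \ref{Defcell} shows that every $(1,\ldots,1)$-cell is an open subset of $\R^n$. If such an open cell lies in $A$ then it lies in $\inte(A)$ and so cannot meet $\bd(A)$; if it lies in $\R^n\setminus A$ then it lies in the open set $\R^n\setminus \cl(A)$ and again misses $\bd(A)$. Consequently
$$
\bd(A)\;\subseteq\;\bigcup_{C_i\text{ not a }(1,\ldots,1)\text{-cell}} C_i,
$$
a finite union of cells each having some $i_j=0$ and therefore of dimension at most $n-1$. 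By Definition \ref{dimension} this forces $\dim\bd(A)\leq n-1<n$, as required.

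For the concluding assertion I would use that $\inte(A)$ is open and $\cl(A)$ is closed and bounded, so both are Lebesgue measurable with finite volume. Since $\inte(A)\subseteq A\subseteq \cl(A)$ and $\cl(A)\setminus\inte(A)=\bd(A)$ has just been shown to be a null set, completeness of Lebesgue measure yields that $A$ is measurable and that $\Vol(\inte(A))=\Vol(A)=\Vol(\cl(A))$.

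No step presents a serious obstacle; the entire argument is cell decomposition together with the observation that non-open cells have dimension strictly less than $n$. The one point meriting a moment's care is checking that the $(1,\ldots,1)$-cells of the decomposition are genuinely open in $\R^n$, since this is precisely what converts ``open cell contained in $A$'' into ``cell contained in $\inte(A)$'', and thereby excludes all such cells from $\bd(A)$.
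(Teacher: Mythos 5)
Your argument is correct, and it does the same thing in spirit as the paper but fills in a step the paper outsources. The paper's proof is a one-liner: it cites \cite[Ch.4, (1.10) Corollary]{vandenDries} for the fact that $\dim\bd(A)<n$, then invokes the observation preceding the lemma (a definable set of dimension $<n$ has Lebesgue measure zero) and stops there, leaving the ``in particular'' clause to the reader. You instead re-derive the dimension bound from scratch via the cell decomposition theorem, using the observation that $(1,\ldots,1)$-cells are open and therefore fall either into $\inte(A)$ or into $\R^n\setminus\cl(A)$, so $\bd(A)$ is covered by the remaining cells of dimension $\le n-1$. This is a clean, self-contained proof of the cited corollary of van den Dries and is perfectly valid. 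Two small remarks. First, your final appeal to ``Definition \ref{dimension}'' is a slight shortcut: from $\bd(A)\subseteq\bigcup_i C_i$ with each $\dim C_i\le n-1$ you want $\dim\bd(A)\le n-1$, which requires monotonicity of dimension under definable inclusion and the fact that dimension of a finite union is the maximum of the dimensions; both are standard (see, e.g., \cite[Ch.4, (1.3) Proposition]{vandenDries}) but are not literally the definition. Second, the claim that $(1,\ldots,1)$-cells are open in $\R^n$ is indeed provable by the induction you sketch (it is \cite[Ch.3, (2.6) Lemma]{vandenDries}), so that step is fine. The concluding soft argument from $\Vol(\bd(A))=0$ to measurability of $A$ and the equalities of volumes is exactly what the paper intends.
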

\begin{proof}
By \cite[Ch.4, (1.10) Corollary]{vandenDries} we have $\dim \bd(A)<n$. This, combined with the previous observation yields $\Vol(\bd(A))=0$. 
\end{proof}
Berarducci and Otero \cite{Berarducci2004} have proven measurability results for more general o-minimal structures expanding a field, not necessarily $\R$. 
E.g., \cite[2.5 Theorem]{Berarducci2004} implies that any bounded definable set is measurable.

\begin{lemma}\label{Vjclos}
Let $Z\subseteq \IR^{m+n}$ be a definable family and suppose the  fibers $Z_T$ are bounded. Then for $1\leq j\leq n-1$ 
the $j$-dimensional volumes of the orthogonal projections of $Z_T$ 
on every $j$-dimensional coordinate subspace of $\R^n$ exist and are finite.
Moreover, we have
$V_j(Z_T)=V_j(\cl(Z_T))$.
\end{lemma}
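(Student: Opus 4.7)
The plan is to read off both assertions from the definability of coordinate projections together with Lemma \ref{measdef}. Fix a $j$-dimensional coordinate subspace $C$ of $\R^n$, and let $\pi_C:\R^n\rightarrow C$ denote the orthogonal projection. I will treat $\pi_C(Z_T)$ simultaneously as a subset of $C\subseteq \R^n$ and, after deletion of the vanishing coordinates (which is itself a coordinate projection, hence definable), as a subset of $\R^j$.

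For the first assertion, Lemma \ref{basicfactsdefsets}(v) applied to the slice $Z_T$ (which is definable by Lemma \ref{basicfactsdefsets}(vi)) shows that $\pi_C(Z_T)$ is a definable subset of $\R^n$, and transferring to $\R^j$ via the coordinate identification keeps it definable. Since $Z_T$ is bounded, so is $\pi_C(Z_T)$. Lemma \ref{measdef} then gives that $\pi_C(Z_T)$ is Lebesgue measurable with finite $j$-dimensional volume.

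For the second assertion, the key point is the set-theoretic identity
\begin{equation*}
\pi_C(\cl(Z_T)) = \cl(\pi_C(Z_T)).
\end{equation*}
The inclusion $\subseteq$ follows from continuity of $\pi_C$; for $\supseteq$, note that $\cl(Z_T)$ is compact (as $Z_T$ is bounded), so $\pi_C(\cl(Z_T))$ is compact, hence closed, and it clearly contains $\pi_C(Z_T)$. Now apply Lemma \ref{measdef} to the bounded definable set $\pi_C(Z_T)$: we get
\begin{equation*}
\Vol_j(\pi_C(Z_T)) = \Vol_j(\cl(\pi_C(Z_T))) = \Vol_j(\pi_C(\cl(Z_T))).
\end{equation*}
Summing over the $\binom{n}{j}$ coordinate subspaces yields $V_j(Z_T) = V_j(\cl(Z_T))$.

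There is no real obstacle here; the proof is a routine assembly of the three ingredients already established: the definability of coordinate projections of slices (Lemma \ref{basicfactsdefsets}), the fact that a continuous image of a compact set is compact (giving the identity on closures and projections), and the measurability/boundary-null result Lemma \ref{measdef} for bounded definable sets. The only point worth handling explicitly is the identification of $C$ with $\R^j$ to apply Lemma \ref{measdef} in the correct ambient dimension, which amounts to another coordinate projection and so is harmless.
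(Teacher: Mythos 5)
Your proof is correct and follows essentially the same route as the paper's: both establish the identity $\pi_C(\cl(Z_T))=\cl(\pi_C(Z_T))$ and then invoke Lemma \ref{measdef} applied to $\pi_C(Z_T)$ inside the coordinate space $C$. You are a bit more careful than the paper in justifying the reverse inclusion via compactness of $\cl(Z_T)$ (continuity alone only gives $\subseteq$), which is a small but genuine improvement in exposition.
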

\begin{proof}
Let $C$ be a coordinate space of dimension $j$, and let $\pi_C$ be the orthogonal projection from $\IR^n$ to $C$.
Recall that the Lebesgue measure on $C$ is denoted by $\Vol_j$. 
Using the continuity of $\pi_C$ we get  $\pi_C(\cl(Z_T))=\cl(\pi_C(Z_T))$. In particular, $\pi_C(\cl(Z_T))$ is measurable, 
and $\Vol_j(\pi_C(\cl(Z_T)))=\Vol_j(\cl(\pi_C(Z_T)))$.
Next we apply Lemma \ref{measdef} with $A=\pi_C(Z_T)$ in the coordinate space $C$ to get $\Vol_j(\cl(\pi_C(Z_T)))=\Vol_j(\pi_C(Z_T))$,
and this proves the claim.
\end{proof}


Next we recall the definition of $j$-rectifiability from \cite[Ch.3, 3.2.14]{Federer}.
\begin{definition}\label{krectifiable}
Let  $A \subseteq \R^n$ and let $j$ be a positive integer. 
We say $A$ is $j$-rectifiable if there exists a Lipschitz function mapping some bounded subset of $\R^j$ onto $A$. Moreover, $A$ is $(\H^j,j)$-rectifiable if there exist countably 
many $j$-rectifiable sets whose union is $\H^j$-almost $A$ and $\H^j(A)<\infty$.
\end{definition}

\begin{proposition}\label{proprect}
Let $A\subseteq \R^n$ be bounded and definable, and suppose $\dim A=d>0$.
Then $A$ is $(\H^j,j)$-rectifiable for every $j$ such that $d \leq j \leq n$.
\end{proposition}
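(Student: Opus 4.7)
The plan is to reduce the statement directly to Corollary \ref{lippar} together with the finiteness of Hausdorff measure from Proposition \ref{defdimhausdorffdim}. First, I would invoke Corollary \ref{lippar} to obtain a finite collection of Lipschitz maps $\phi_1, \ldots, \phi_k : (0,1)^d \to A$ whose images cover $A$. Since $(0,1)^d$ is a bounded subset of $\R^d$, each image $\phi_i((0,1)^d)$ is by definition $d$-rectifiable.

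Next I would extend this to all $j$ with $d \leq j \leq n$. For $j = d$ there is nothing further to do: $A$ is already a finite (hence countable) union of $d$-rectifiable sets, and Proposition \ref{defdimhausdorffdim} gives $\H^d(A) < \infty$, so the two clauses in Definition \ref{krectifiable} are satisfied. For $j > d$, I would compose each $\phi_i$ with the projection $\R^j \to \R^d$ onto the first $d$ coordinates: concretely, the map $\tilde{\phi}_i : (0,1)^j \to A$, $\tilde{\phi}_i(x_1,\dots,x_j) = \phi_i(x_1,\dots,x_d)$, is Lipschitz (its Lipschitz constant is bounded by that of $\phi_i$) and its image is $\phi_i((0,1)^d)$. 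Hence each $\phi_i((0,1)^d)$ is $j$-rectifiable, so again $A$ is a finite union of $j$-rectifiable sets. The measure condition $\H^j(A) < \infty$ is again supplied by Proposition \ref{defdimhausdorffdim} (in fact $\H^j(A) = 0$ for $j > d$, which makes the conclusion essentially trivial in that range).

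There is no real obstacle: the two ingredients we need have been stated just above. The only mildly delicate point is confirming that $(\H^j,j)$-rectifiability allows the countable collection to consist of $j$-rectifiable sets whose images need not live in a $j$-plane, which is exactly how Definition \ref{krectifiable} is phrased, so the extended maps $\tilde{\phi}_i$ (with codomain $\R^n$) are admissible.
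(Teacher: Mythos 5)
Your proof is correct and takes essentially the same route as the paper: invoke Corollary \ref{lippar} for a finite Lipschitz covering, extend the domain from $(0,1)^d$ to $(0,1)^j$ by ignoring the extra coordinates, and obtain finiteness of $\H^j(A)$ from Proposition \ref{defdimhausdorffdim}. You spell out the domain-extension step a bit more explicitly than the paper (which just says the domain ``can clearly be extended''), but the argument is identical.
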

\begin{proof}
By Corollary \ref{lippar} we can cover $A$ by the images of finitely many Lipschitz maps $\phi: (0,1)^d \rightarrow \R^n$ 
whose domain can clearly be extended to $(0,1)^j$ for every $j=d+1, \ldots ,n$ without loosing the Lipschitz condition. 
The finiteness of $\H^j(A)$ comes from Proposition \ref{defdimhausdorffdim}.
\end{proof}

We fix an integer $j\in\{1,\ldots ,n-1\}$. Let $I$ be a subset of $\{1, \ldots , n \}$ of cardinality $j$ and let $\pi_{I}:\R^n \rightarrow \R^j$ 
be the projection map such that $\pi_{I}(x_1, \ldots , x_n)= (x_i)_{i \in I}$. For $y \in \R^j$ let
\begin{equation}\label{defN}
 N(\pi_I \mid A, y) = \lvert  \{ x\in A: \pi_I(x)=y \}    \rvert = \lvert   \pi_I^{-1}(y)\cap A    \rvert .
\end{equation}
A priori, $N(\pi_I \mid A, y)$ could be infinite, even for every $y \in \pi_I(A)$. The following theorem (\cite[3.2.27 Theorem]{Federer}) tells us that if 
$A$ is $(\H^j,j)$-rectifiable then we can integrate $N(\pi_I \mid A, y)$ 
and obtain a finite value.
Unless specified otherwise, the domain of integration is always $\R^j$.

\begin{theorem}\cite[3.2.27 Theorem]{Federer}\label{thmfederer}
If $1\leq j\leq n$, and if $A$ is a $(\H^j,j)$-rectifiable subset of $\R^n$,  then
$$
\left( \sum_{|I|=j} a_I(A)^2  \right)^{\frac{1}{2}}\leq \H^j(A)\leq \sum_{|I|=j}a_I(A) ,
$$
where
$$
a_I(A)=\int N(\pi_I \mid A, y) \de \Leb^j y. 
$$
\end{theorem}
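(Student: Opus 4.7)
The plan is to apply the area formula for Lipschitz maps between Euclidean spaces of equal dimension, combined with the Cauchy--Binet identity. Since $A$ is $(\H^j,j)$-rectifiable, by definition it is (up to a $\H^j$-null set) covered by countably many Lipschitz images of bounded subsets of $\R^j$, so $\H^j$-almost every $x\in A$ admits an approximate tangent $j$-plane $T_xA\subset\R^n$. For such $x$ and each $I\subseteq\{1,\dots,n\}$ with $|I|=j$, write $\sigma_I(x)$ for the $j$-Jacobian of the linear map $\pi_I\colon T_xA\to\R^j$; this is a number in $[0,1]$ equal to the absolute value of the determinant of $\pi_I|_{T_xA}$ expressed in orthonormal bases.

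First I would invoke the area formula on each Lipschitz piece of a rectifiable decomposition to rewrite
\[
a_I(A)=\int N(\pi_I\mid A,y)\,\de\Leb^j y=\int_A \sigma_I(x)\,\de\H^j x.
\]
The essential linear-algebraic input is Cauchy--Binet: if $M$ is the $j\times n$ matrix whose rows form an orthonormal basis of $T_xA$, then $\sigma_I(x)^2=\det(M_I)^2$, where $M_I$ is the $j\times j$ submatrix of $M$ with columns indexed by $I$, and
\[
\sum_{|I|=j}\sigma_I(x)^2=\det(MM^T)=1.
\]

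The upper bound then follows because $\sigma_I(x)\in[0,1]$ forces $\sigma_I(x)\ge\sigma_I(x)^2$, so summing and integrating gives
\[
\sum_{|I|=j}a_I(A)\ge\int_A\sum_{|I|=j}\sigma_I(x)^2\,\de\H^j x=\H^j(A).
\]
For the lower bound, I would apply Cauchy--Schwarz inside each integral to get
\[
a_I(A)^2\le\H^j(A)\int_A\sigma_I(x)^2\,\de\H^j x,
\]
then sum over $I$ and invoke Cauchy--Binet once more to obtain $\sum_I a_I(A)^2\le\H^j(A)^2$, which is the claimed inequality.

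The main obstacle is establishing the area formula in the $(\H^j,j)$-rectifiable setting rather than for smooth submanifolds: one needs to produce an essentially disjoint decomposition of $A$ into Lipschitz graphs on which the classical change-of-variables formula applies, and verify that the approximate tangent plane to $A$ coincides $\H^j$-almost everywhere with the tangent plane of each piece. Once the area formula is granted, all that remains is the elementary linear algebra above, with Cauchy--Binet doing all the geometric work and Cauchy--Schwarz producing the sharper $\ell^2$ lower bound.
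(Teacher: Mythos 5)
Your proof is correct and follows essentially the same route as Federer's argument for 3.2.27 (the paper only cites this result, it does not reprove it): express each $a_I(A)$ via the area formula on the rectifiable set as $\int_A \sigma_I\,\de\H^j$, use Cauchy--Binet on the approximate tangent plane to get $\sum_{|I|=j}\sigma_I(x)^2=1$ $\H^j$-a.e., and deduce the right inequality from $\sigma_I\ge\sigma_I^2$ and the left from Cauchy--Schwarz. The only point you flag as an obstacle, the area formula in the rectifiable rather than smooth setting, is itself \cite[3.2.22]{Federer}, so no genuine gap remains.
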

To conclude this section we apply Theorem \ref{thmfederer} to fibers of definable families.
\begin{lemma}\label{lem1}
Let $S \subseteq \R^{\p+n}$ be a definable family whose fibers $S_\a \subseteq \R^n$ are bounded and of dimension at most $j \geq 1$. 
Then there exists a real constant $\M_I=\M_I(S)$ such that
$$
\H^j(S_\a) \leq \sum_{|I|=j}\M_I \Vol_j\left(  \pi_I \left(S_{\a}\right) \right),
$$
for every $\a \in \R^\p$.
\end{lemma}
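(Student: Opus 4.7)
The plan is to bound $\H^j(S_\a)$ via the multiplicity inequality of Federer (Theorem \ref{thmfederer}), and to extract a constant bound on the pointwise multiplicity $N(\pi_I \mid S_\a, y)$ from the uniform component bound provided by o-minimality. In the degenerate cases $\dim S_\a \leq 0$ both sides vanish, so one may assume $1 \leq \dim S_\a \leq j$; then Proposition \ref{proprect} renders $S_\a$ $(\H^j, j)$-rectifiable, which is exactly the hypothesis needed to apply Theorem \ref{thmfederer}.

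For each subset $I$ of $\{1,\ldots,n\}$ of cardinality $j$, I will apply Corollary \ref{corollconncomp} to the definable family
\[
\widetilde S^{I} \;=\; \{(\a, y, x) \in \R^{\p + j + n} : (\a, x) \in S,\ \pi_I(x) = y\},
\]
whose fiber over $(\a, y) \in \R^{\p+j}$ is exactly $\pi_I^{-1}(y) \cap S_\a$. This produces a number $\M_I = M_{\widetilde S^{I}}$, depending only on $S$ and $I$, that bounds the number of connected components of every such fiber. Whenever this fiber is $0$-dimensional it is finite, and the number of points equals the number of components, giving $N(\pi_I \mid S_\a, y) \leq \M_I$. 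The remaining $y$'s form the definable set $\{y \in \R^j : \dim(\pi_I^{-1}(y) \cap S_\a) \geq 1\}$, which by the standard o-minimal fiber dimension theorem has dimension at most $\dim S_\a - 1 \leq j-1$, and hence $\Leb^j$-measure zero by Proposition \ref{defdimhausdorffdim}.

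Combining the two cases, $N(\pi_I\mid S_\a,\cdot) \leq \M_I$ almost everywhere on $\R^j$, and this function vanishes outside the bounded definable (hence measurable, by Lemma \ref{measdef}) set $\pi_I(S_\a)$. Theorem \ref{thmfederer} then yields
\[
\H^j(S_\a) \;\leq\; \sum_{|I|=j} \int N(\pi_I \mid S_\a, y)\,\de\Leb^j y \;\leq\; \sum_{|I|=j} \M_I \, \Vol_j\bigl(\pi_I(S_\a)\bigr),
\]
as required. The main obstacle is precisely this measure-theoretic step: Corollary \ref{corollconncomp} produces a bound on connected components rather than on points, and one must use a fiber dimension argument to rule out the locus of positive-dimensional preimages in order to convert the component bound into a cardinality bound almost everywhere.
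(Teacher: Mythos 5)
Your proof is correct and follows essentially the same route as the paper's: you apply Federer's multiplicity inequality (Theorem \ref{thmfederer}) after establishing $(\H^j,j)$-rectifiability via Proposition \ref{proprect}, introduce the same auxiliary definable family (your $\widetilde S^{I}$ is the paper's $R$) to invoke Corollary \ref{corollconncomp}, and use the o-minimal fiber-dimension bound to confine the positive-dimensional preimages to a $\Leb^j$-null set. The one tiny point worth making explicit, as the paper does, is the reference for the fiber-dimension theorem, namely \cite[Ch.~4, (1.6) Corollary (ii)]{vandenDries}, which also gives definability of the exceptional locus $P$.
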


\begin{proof}

If $S = \emptyset$, the claim is trivially true. For those $a$ such that $S_\a = \emptyset$ or $\dim S_\a=0$
we have from Proposition \ref{defdimhausdorffdim} that $\H^j\left(S_\a\right)=0$, and so in this case again the claim is trivially true.
Therefore, we can assume that $\dim S_\a>0$, and so we get from Proposition \ref{proprect} that $S_\a$ is $(\H^j,j)$-rectifiable.
Hence, we can apply Theorem \ref{thmfederer}, and we get
\begin{equation*}
 \H^j\left(S_\a\right)\leq \sum_{|I|=j}\int N\left(\pi_I \mid S_\a, y\right) \de \Leb^j y,
\end{equation*}
for every $\a \in \R^\p$ such that $\dim S_\a>0$. Therefore, we are left to prove that for any $I\subseteq \{1,\ldots ,n\}$ of cardinality 
$j$ there exists a real $\M_I=\M_I(S)$ such that 
\begin{equation}\label{eq20}
\int N\left(\pi_I \mid S_\a, y\right) \de \Leb^j y \leq \M_I \Vol_j\left(  \pi_I \left(S_\a\right) \right),
\end{equation}
for every $\a\in \R^{\p}$.

Let $R$ be the definable family
$$
R=\left\lbrace (\a,y,x)\in \R^{\p+j+n}: (\a,x) \in S, y=\pi_I(x)  \right\rbrace.
$$
Note that $R_{(\a,y)}=\pi_I^{-1}(y) \cap S_\a$.
Thus, for every $(\a,y) \in \R^{\p+j}$ we have $ N\left(\pi_I \mid S_\a, y\right)=|R_{(\a,y)}|$. 
Moreover, by Corollary \ref{corollconncomp} there is a uniform upper bound $\M_I$ for the number of connected components of the fibers $R_{(\a,y)}$. 
In particular, if $\dim R_{(\a,y)}=0$ we get $ |R_{(\a,y)}|\leq \M_I$.

Now fix an $\a\in \R^\p$. The restriction ${\pi_I}_{\mid S_\a}:S_\a\rightarrow \R^j$ is a definable map. Thus, by 
\cite[Ch. 4, (1.6) Corollary (ii)]{vandenDries},
we obtain 
$$
P=\left\lbrace y \in \R^{j}: \dim \left( \pi_I^{-1}(y)\cap S_\a\right)\geq 1  \right\rbrace
$$
is definable, and, moreover,
$$
\dim P\leq \dim S_\a -1\leq j-1.
$$
Hence $P$ has measure zero in $\R^j$.
Let $Q$ be its complement in $\pi_I(S_\a)$, i.e., 
$Q=\pi_I(S_\a) \setminus P=\left\lbrace y \in \pi_I(S_\a): \dim \left(\pi_I^{-1}(y)\cap S_\a \right)=0  \right\rbrace$. 
This set is definable, and it is exactly the set of $y$ such that $R_{(\a,y)}$ has dimension zero. Therefore
$$
\int N\left(\pi_I \mid S_\a, y\right) \de \Leb^j y =\int_Q |R_{(\a,y)}| \de \Leb^j y
\leq \int_Q \M_I \de \Leb^j y=\M_I\Vol_j\left(  \pi_I \left(S_\a\right) \right).
$$

\end{proof}

\section{A geometric inequality}\label{sect_geomineq}

In this section we are going to prove the following proposition. Recall the definition of $V_j'(\cdot)$ from Definition \ref{defV'}, and also 
that $\cl(Z_T)$ denotes the topological closure of $Z_T$.

\begin{proposition}\label{geomineq}
 
Let $Z\subseteq \R^{m+n}$ be a definable family such that the fibers $Z_T$ are bounded, and let $j$ be an integer such that $0\leq j\leq n-1$. Then there exists a constant $\L_Z$, 
depending only on the family and on $j$, such that 
$$
V_j'(\cl(Z_T))\leq \L_Z V_j(Z_T),
$$
for every $T \in \R^m$. 

\end{proposition}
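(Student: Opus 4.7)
The $j=0$ case is immediate since $V_0'=V_0=1$ by convention, so fix $1 \leq j \leq n-1$. The plan is to produce, uniformly in $T$ and in the choice of a $j$-dimensional subspace $\Sigma \subseteq \R^n$, a definable subset $S_{(T,V)}\subseteq \cl(Z_T)$ of dimension at most $j$ that projects bijectively onto $\pi_\Sigma(\cl(Z_T))$, and then to bound its $j$-Hausdorff measure via Lemma \ref{lem1}.

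First I parametrize $j$-subspaces by orthonormal $j$-frames $V=(v_1,\ldots,v_j)$ ranging over the Stiefel variety $\mathrm{St}(j,n)\subseteq \R^{nj}$, which is semialgebraic hence definable. In such a basis, the orthogonal projection onto $\Sigma=\mathrm{span}(V)$ is given by the polynomial map $\pi_V:x\mapsto (v_1\cdot x,\ldots,v_j\cdot x)\in \R^j$, and this identification preserves $j$-dimensional volume. By Lemma \ref{collintfibers}, $\{(T,x):x\in\cl(Z_T)\}$ is definable, hence so is
$$
C=\{(T,V,y,x)\in \R^m\times\mathrm{St}(j,n)\times\R^j\times\R^n : x\in\cl(Z_T),\; y=\pi_V(x)\}.
$$
Proposition \ref{skolem} (definable Skolem functions), applied to the projection of $C$ onto its first three factors, yields a definable map $\sigma(T,V,y)$ whose graph lies in $C$. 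Setting $S_{(T,V)}:=\{\sigma(T,V,y):y\in\pi_V(\cl(Z_T))\}$, the collection $S:=\{(T,V,x):x\in S_{(T,V)}\}$ is a definable family with bounded fibers; by construction $\pi_V$ is injective on $S_{(T,V)}$ and $\pi_V(S_{(T,V)})=\pi_V(\cl(Z_T))$. Hence by Lemma \ref{Propdefdim}(iv), $\dim S_{(T,V)}=\dim \pi_V(\cl(Z_T))\leq j$.

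Now Lemma \ref{lemhaus} (orthogonal projections do not increase $\H^j$) yields
$$
\Vol_j(\pi_V(\cl(Z_T)))=\H^j(\pi_V(S_{(T,V)}))\leq\H^j(S_{(T,V)}),
$$
while Lemma \ref{lem1}, applied to the definable family $S$ parametrized by $(T,V)\in \R^m\times\R^{nj}$, furnishes a constant $M=M(S,j)$ depending only on $Z$ and $j$ with
$$
\H^j(S_{(T,V)})\leq M\!\!\sum_{|I|=j}\!\Vol_j(\pi_I(S_{(T,V)}))\leq M\!\!\sum_{|I|=j}\!\Vol_j(\pi_I(\cl(Z_T)))=M\,V_j(Z_T),
$$
where the last equality uses $S_{(T,V)}\subseteq\cl(Z_T)$ and Lemma \ref{Vjclos}. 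Since this bound is uniform in $V$, taking the supremum over $\Sigma$ gives $V_j'(\cl(Z_T))\leq L_Z V_j(Z_T)$ with $L_Z:=M$. The main obstacle is precisely engineering the construction so that \emph{both} the section $\sigma$ \emph{and} the Lemma \ref{lem1} constant $M$ are controlled solely in terms of $Z$ and $j$; this is exactly what is secured by moving $\Sigma$ into the definable parameter space via $\mathrm{St}(j,n)$ and invoking uniform definable Skolem functions together with the uniform fiber-component bound (Corollary \ref{corollconncomp}) underlying Lemma \ref{lem1}.
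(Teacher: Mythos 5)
Your proposal is correct and follows essentially the same strategy as the paper: move the varying $j$-dimensional subspace $\Sigma$ into a definable parameter space, use definable Skolem functions (Proposition \ref{skolem}) to produce a definable section of dimension at most $j$ inside $\cl(Z_T)$ that projects onto $\pi_\Sigma(\cl(Z_T))$, and then apply Lemma \ref{lem1} to that definable family to get a uniform bound. The only cosmetic difference is that you parametrize the subspaces directly by orthonormal frames in the Stiefel variety $\mathrm{St}(j,n)$, whereas the paper parametrizes by elements of $O_n(\R)$ and rotates so that $\Sigma$ becomes the first $j$ coordinates, constructing the section $Z'$ by pulling the graph of the Skolem function back through $\varphi^{-1}$; your version avoids that back-and-forth rotation but the underlying argument is the same.
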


If $Z=\emptyset$ or $j=0$ the inequality is trivially true. For the remainder of this section we assume that $Z$ is nonempty, and we fix an integer $j$ satisfying $1\leq j\leq n-1$.
By Lemma \ref{Vjclos} we have $V_j(Z_T)=V_j(\cl(Z_T))$. Hence, for the rest of this section we can and will also assume $$\cl(Z_T)=Z_T.$$ 
Let $O_n(\R)$ be the orthogonal group. It embeds into $\R^{n^2}$ if we identify, as already done before, a linear function $\varphi$ with the image vector of the standard basis. 
So $O_n(\R)$ is a semialgebraic set, as it is defined by polynomial equalities.

\begin{lemma}\label{lemfam}
There exists a definable set $Z'\subseteq \R^{n^2+m+n}$ depending only on $Z$ such that 
\begin{equation}\label{ineqdim}
\dim Z'_{(\varphi,T)} \leq j ,
\end{equation}
and
\begin{equation}\label{subset}
Z'_{(\varphi,T)}\subseteq Z_T,
\end{equation}
for every $(\varphi,T) \in \R^{n^2+m}$, and
\begin{equation}\label{ineqhaus}
V'_j(Z_T)\leq \sup_{\varphi \in O_n(\R)} \H^j\left(Z'_{(\varphi,T)}\right),
\end{equation}
for every $T\in \R^m$.
\end{lemma}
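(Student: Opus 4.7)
The plan is to parameterize every $j$-dimensional linear subspace of $\R^n$ by the semialgebraic group $O_n(\R)$: with $C_j=\mathrm{span}(e_1,\ldots,e_j)$, any such subspace has the form $\varphi(C_j)$ for some $\varphi\in O_n(\R)$, and since $\varphi$ is an isometry one has $\Vol_j(\pi_{\varphi(C_j)}(Z_T))=\Vol_j(\pi_{C_j}(\varphi^{-1}(Z_T)))$. Thus
\begin{equation*}
V'_j(Z_T)=\sup_{\varphi\in O_n(\R)}\Vol_j(\pi_{C_j}(\varphi^{-1}(Z_T))),
\end{equation*}
so it will suffice, uniformly in $(\varphi,T)$, to carve out a definable subset of $Z_T$ of dimension at most $j$ whose image under $\pi_{C_j}\circ\varphi^{-1}$ exhausts this projection.

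To do so, I would first form the definable set
\begin{equation*}
A'=\bigl\{(\varphi,T,y,x)\in O_n(\R)\times\R^m\times\R^j\times\R^n : (T,x)\in Z,\ \pi_{C_j}(\varphi^{-1}(x))=y\bigr\},
\end{equation*}
using that $O_n(\R)$ is semialgebraic and that on $O_n(\R)$ the entries of $\varphi^{-1}$ are polynomials in those of $\varphi$. Applying Proposition \ref{skolem} to the projection $\rho:A'\to\R^{n^2+m+j}$ that forgets the $x$-coordinate produces a definable section $f:\rho(A')\to\R^n$ with $\Gamma(f)\subseteq A'$, and I would then take
\begin{equation*}
Z'=\bigl\{(\varphi,T,f(\varphi,T,y)) : (\varphi,T,y)\in\rho(A')\bigr\}\subseteq\R^{n^2+m+n},
\end{equation*}
which is definable as a coordinate projection of $\Gamma(f)$.

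The three conclusions would then be verified in turn. Property \eqref{subset} is immediate from $\Gamma(f)\subseteq A'$. For \eqref{ineqdim}, the map $y\mapsto f(\varphi,T,y)$ is a definable bijection from $\rho(A')_{(\varphi,T)}=\pi_{C_j}(\varphi^{-1}(Z_T))\subseteq\R^j$ onto $Z'_{(\varphi,T)}$ — injectivity comes from the identity $y=\pi_{C_j}(\varphi^{-1}(f(\varphi,T,y)))$ — so Lemma \ref{Propdefdim}(iv) forces $\dim Z'_{(\varphi,T)}\leq j$. For \eqref{ineqhaus}, the composition $\pi_{C_j}\circ\varphi^{-1}$ is $1$-Lipschitz and sends $Z'_{(\varphi,T)}$ surjectively onto $\pi_{C_j}(\varphi^{-1}(Z_T))$, so Lemma \ref{lemhaus} together with Proposition \ref{defdimhausdorffdim} (identifying $\H^j$ with $\Vol_j$ on subsets of $\R^j$) yields
\begin{equation*}
\Vol_j(\pi_{C_j}(\varphi^{-1}(Z_T)))\leq \H^j(Z'_{(\varphi,T)}),
\end{equation*}
and taking the supremum over $\varphi\in O_n(\R)$ combined with the initial identity gives the required bound.

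The hard part, in my view, will be setting up $A'$ so that the Skolem section $f$ is automatically a bijection onto the slice it traces out in $Z_T$; it is precisely the injectivity test via $y=\pi_{C_j}(\varphi^{-1}(x))$ that both caps the dimension of $Z'_{(\varphi,T)}$ and keeps the $1$-Lipschitz estimate tight. Everything else reduces to routine bookkeeping of definable sets, isometries, and Hausdorff measure.
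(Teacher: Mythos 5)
Your proof is correct and follows essentially the same strategy as the paper's: parameterize the $j$-dimensional subspaces by $O_n(\R)$, use a definable Skolem function to extract a $j$-dimensional definable slice of $Z_T$ whose image under the relevant $1$-Lipschitz map exhausts the projection, and then compare $j$-Hausdorff measures. The only cosmetic difference is that the paper's auxiliary set $S$ has fibers $\varphi(Z_T)$ and recovers $Z'$ by applying $\varphi^{-1}$ at the end, whereas you build the condition $\pi_{C_j}(\varphi^{-1}(x))=y$ directly into $A'$ and so work with the original $x$-coordinates throughout, which streamlines the bookkeeping without changing the argument.
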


\begin{proof}
 
Let
$$
S =\{  (\varphi, T, y ) \in  \R^{n^2+m+n} : \varphi  \in O_n(\R),  y \in \varphi(Z_T) \}.
$$
This set is nothing but the set $W$ in (\ref{defW}) intersected with $O_n(\R)\times \R^{m+n}$ and is therefore definable. Note that
\begin{equation} \label{eq1}
S_{(\varphi,T)}=\varphi(Z_T),
\end{equation}
for every $(\varphi,T) \in O_n(\R)\times \R^{m}$.
Let $\pi:\R^{n^2+m+n} \rightarrow \R^{n^2+m+j}$ be the projection that cancels the last $n-j$ coordinates. We use the fact that o-minimal structures have definable Skolem functions 
(Proposition \ref{skolem}, see also the observation after Proposition \ref{skolem}). There exists an explicit construction of a definable function 
$$
f:\pi(S)\subseteq \R^{n^2+m+j} \rightarrow \R^{n-j},
$$
such that the graph of $f$
$$
\Gamma(f)=\{(\varphi,T,z,f(\varphi,T,z)): (\varphi,T,z)\in \pi(S)\}\subseteq \pi(S)\times \R^{n-j},
$$
is contained in $S$. Therefore
\begin{equation} \label{eq2}
 \Gamma(f)_{(\varphi,T)}\subseteq S_{(\varphi,T)},
\end{equation}
for every $(\varphi,T)\in \R^{n^2+m}$. Moreover, since $\pi(S)=\pi(\Gamma(f))$ we have 
\begin{equation}\label{eq6}
\pi(S)_{(\varphi,T)}=\pi(\Gamma(f))_{(\varphi,T)},
\end{equation}
for every $(\varphi,T) \in \R^{n^2+m}$.
The function
$$
\begin{array}{cccc}
F:&\pi(S) &\rightarrow  & \Gamma(f)\\
&  (\varphi,T,z) &\mapsto & \left(\varphi,T,z,f(\varphi,T,z)\right)
\end{array}
$$
is definable because its graph is the definable set 
$$
\left\lbrace \left(\varphi,T,z,\varphi,T,z,f(\varphi,T,z)\right): (\varphi,T,z) \in \pi(S) \right\rbrace \subseteq  \pi(S)\times \Gamma(f).
$$
Moreover, $F$ is a bijection with inverse $\pi_{\mid \Gamma(f)}$. Now fix any $(\varphi,T)$, suppose $\pi(S)_{(\varphi,T)}$ is nonempty,
and consider the bijection  $g:\pi(S)_{(\varphi,T)} \rightarrow \Gamma(f)_{(\varphi,T)}$ defined by $g(z)=(z,f(\varphi,T,z))$.
Using the elementary properties we see that $\Gamma(g)$ is definable. Hence, by Lemma \ref{Propdefdim}, we conclude that
\begin{equation} \label{eq3}
 \dim \pi(S)_{(\varphi,T)} = \dim \Gamma(f)_{(\varphi,T)},
\end{equation}
for every $(\varphi,T) \in \R^{n^2+m}$. Note that $\pi(S)_{(\varphi,T)}=\emptyset$ implies $\Gamma(f)_{(\varphi,T)}=\emptyset$, 
and hence (\ref{eq3}) remains true for  $\pi(S)_{(\varphi,T)}=\emptyset$.

Again by the elementary properties, the set
\begin{equation*}
Z'= \left\lbrace  (\varphi,T,x)\in \R^{n^2+m+n} : \varphi \in O_n(\R), \varphi(x) \in  \Gamma(f)_{(\varphi,T)}  \right\rbrace,
\end{equation*}
is definable. 
Note that 
\begin{equation}\label{eq4}
\varphi \left(Z'_{(\varphi,T)} \right)=\Gamma(f)_{(\varphi,T)}
\end{equation}
for every $(\varphi,T) \in O_n(\R)\times \R^{m}$. Moreover, if $\varphi \in \R^{n^2} \setminus O_n(\R)$, we have $Z'_{(\varphi,T)}=\emptyset$ and (\ref{ineqdim}), (\ref{subset}) are satisfied.

Now fix $(\varphi,T) \in O_n(\R)\times \R^{m}$. As $\varphi \in O_n(\R)$ we can apply Lemma \ref{Propdefdim} to get
\begin{equation} \label{eq5}
 \dim Z'_{(\varphi,T)} = \dim \Gamma(f)_{(\varphi,T)}.
\end{equation}
By (\ref{eq1}), (\ref{eq2}) and (\ref{eq4}) we have that 
$$
\varphi\left(Z'_{(\varphi,T)}\right)=\Gamma(f)_{(\varphi,T)}\subseteq S_{(\varphi,T)}=\varphi(Z_T),
$$
and this proves (\ref{subset}). Moreover, since $\pi(S)_{(\varphi,T)}\subseteq \R^j$ and by (\ref{eq3}) and (\ref{eq5}), we have
$$
j \geq \dim \pi(S)_{(\varphi,T)} = \dim Z'_{(\varphi,T)},
$$
that is exactly (\ref{ineqdim}).

We now prove the volume inequality (\ref{ineqhaus}). Let $\Sigma$ be any $j$-dimensional linear subspace of $\R^n$. Fix an orthonormal basis $\{u_1,\ldots , u_j\}$ of $\Sigma$. 
Suppose $\varphi$ is in $O_n(\R)$ and such that $\varphi (u_i)=e_i$ for $i=1, \ldots , j$. Let $\pi_\Sigma$ be the orthogonal projection map from $\R^n$ to $\Sigma$ and $\pit$ the 
projection from $\R^n$ to the coordinate subspace spanned by $e_1, \ldots, e_j$. Note that $\varphi \circ  \pi_\Sigma$ and $\pit \circ \varphi$ coincide on $\Sigma$ and their
kernel is the orthogonal complement $\Sigma^\bot$.
Hence,  $\varphi \circ  \pi_\Sigma=\pit \circ \varphi$. Recalling that $\H^j=\Vol_j$ on $\Sigma$ and $\varphi(\Sigma)$, and using (\ref{eq1}) and Lemma \ref{lemhaus}, we obtain
$$
\Vol_j\left(\pi_{\Sigma}(Z_T)\right) = \Vol_j\left(\varphi \left(\pi_{\Sigma}(Z_T)\right)\right) = 
\mbox{Vol}_j\left(\pit \left(\varphi\left(Z_T\right)\right)\right)=\mbox{Vol}_j\left(\pit\left(S_{(\varphi,T)}\right)\right).
$$ 
Then
\begin{equation}\label{eq7}
V'_j(Z_T)=\sup_{\Sigma} \Vol_j (\pi_\Sigma(Z_T)) \leq \sup_{\varphi \in O_n(\R)}\Vol_j\left(\pit\left(S_{(\varphi,T)}\right)\right).
\end{equation}
Fix $(\varphi,T) \in O_n(\R)\times \R^{m}$. Note that for any set $A\subseteq \R^{n^2+m+n}$ we have $\pit \left(A_{(\varphi,T)}\right)=\{(x_1,\ldots ,x_j ,0 ,\ldots,0):(\varphi,T,x_1 ,\ldots ,x_n)\in A \}$ and 
$\pi(A)_{(\varphi,T)}=\{(x_1,\ldots ,x_j):(\varphi,T,x_1, \ldots ,x_n)\in A\}$.
The latter in conjunction with (\ref{eq6}) gives
$$
\pit \left(S_{(\varphi,T)}\right)=  \pit \left(\Gamma(f)_{(\varphi,T)}\right).
$$
By this and Lemma \ref{lemhaus} we get
\begin{equation}\label{eq8}
\Vol_j\left(\pit \left(S_{(\varphi,T)}\right)\right)= \H^j \left(\pit \left(S_{(\varphi,T)}\right)\right)\leq \H^j\left(\Gamma(f)_{(\varphi,T)}\right).
\end{equation}
Again by (\ref{eq4}) and Lemma \ref{lemhaus} we have
\begin{equation}\label{eq9}
\H^j\left(\Gamma(f)_{(\varphi,T)}\right) = \H^j\left(Z'_{(\varphi,T)}\right),
\end{equation}
for every $(\varphi,T) \in O_n(\R)\times \R^{m}$.
Combining (\ref{eq7}), (\ref{eq8}) and (\ref{eq9}) proves (\ref{ineqhaus}), and thereby completes the proof of Lemma \ref{lemfam}.

\end{proof}

As in Section \ref{sect_measuretheoretic_prelim}, $I$ indicates a nonempty proper subset of $ \{1,\ldots ,n\}$ and $\pi_I$ is the projection map such that 
$\pi_{I}(x_1, \ldots , x_n)= (x_i)_{i \in I}$. 

Applying Lemma \ref{lem1} to the family $Z'$ we conclude that there exist $\M_I$ such that 
$$
\H^j\left(Z'_{(\varphi,T)}\right)\leq  \sum_{|I|=j} \M_I \Vol_j\left(  \pi_I \left(Z'_{(\varphi,T)}\right) \right),
$$ 
for every $(\varphi,T)\in \R^{n^2+m}$.

Let $\pi_{C_I}$ be the orthogonal projection map from $\R^n$ to the coordinate subspace $C_I$ spanned by $e_i$, $i \in I$. We have
$$
\Vol_j\left(  \pi_I \left(Z'_{(\varphi,T)}\right) \right)=\Vol_j\left(  \pi_{C_I} \left(Z'_{(\varphi,T)}\right) \right).
$$
Therefore, recalling (\ref{subset}),
$$
\H^j\left(Z'_{(\varphi,T)}\right) \leq \sum_{|I|=j} \M_I \Vol_j\left(  \pi_{C_I} \left(Z'_{(\varphi,T)}\right) \right)  
\leq \L_Z V_j \left(Z'_{(\varphi,T)}\right)\leq \L_Z V_j \left(Z_{T}\right),
$$
where
$$
\L_Z= \max_j \binom{n}{j} \max_I \M_I.
$$
Finally, combining this with (\ref{ineqhaus}) from Lemma \ref{lemfam}, completes the proof of Proposition \ref{geomineq}.

\section{Proof of Theorem \ref{maintheorem}}\label{Proof of Theorem}
First we assume $Z$ is such that $Z_T=\cl(Z_T)$ for all $T$. By assumption the fibers $Z_T$ are also bounded, and so they are compact. 
Thanks to Lemma \ref{lemdav} we can apply Lemma \ref{applyDavenport} with a Davenport constant $h=M_Z$ depending only on $Z$. 
Then we  use Lemmas \ref{PsiV}, \ref{VI}, and Proposition \ref{geomineq} to bound $V_j(\Psi(Z_T))$,
and this proves the estimate of Theorem \ref{maintheorem} when $Z_T=\cl(Z_T)$. From this special case of the theorem we will deduce the general case.

To this end we first note that
$$\left||\Lambda\cap Z_T|-|\Lambda\cap \cl(Z_T)|\right|\leq |\Lambda\cap \bd(Z_T)|.$$

By Lemma \ref{collintfibers} we see that $C=C(Z)=\{(T,x): x\in \cl(Z_T)\}$ and $B=B(Z)=\{(T,x): x\in \bd(Z_T)\}$ are definable.
Clearly, $C_T=\cl(Z_T)$, and $B_T=\bd(Z_T)$, and these sets are closed and bounded as the sets $Z_T$ are bounded.
Hence, we can apply our theorem with $Z=C$ and then with $Z=B$. For $C$ we obtain
$$\left||\Lambda\cap \cl(Z_T)|-\frac{\Vol(\cl(Z_T))}{\det \Lambda}\right|\leq c_{C}\sum_{j=0}^{n-1}\frac{V_j(\cl(Z_T))}{\lambda_1\cdots \lambda_j}.$$
Note that the constant $c_{C}$ depends only on the family $C$, and thus only on the
family $Z$. Moreover, $\Vol(\cl(Z_T))=\Vol(Z_T)$ by Lemma \ref{measdef} and $V_j(\cl(Z_T))=V_j(Z_T)$ by Lemma \ref{Vjclos}.
Using also $\Vol(\bd(Z_T))=0$ by Lemma \ref{measdef}, and $\bd(Z_T)\subseteq \cl(Z_T)$, we get similarly that
$$ |\Lambda\cap \bd(Z_T)|\leq c_{B}\sum_{j=0}^{n-1}\frac{V_j(Z_T)}{\lambda_1\cdots \lambda_j},$$
again with a constant $c_{B}$ depending only on the family $Z$.
Combining these estimates concludes the proof of Theorem \ref{maintheorem} in the general case.

\section*{Acknowledgements}
It is our pleasure to thank Alessandro Berarducci, Zo\'e Chatzidakis, Marcello Mamino, and Vincenzo Mantova for answering many questions about o-minimal structures.
We thank Francesco Ghiraldin for pointing out the example \cite[Example 2.67]{AmbrosioFuscoPallara} mentioned in the introduction, and Andrea Mondino
for helpful discussions on rectifiability.
We also thank Robert Tichy, Johannes Wallner, and Umberto Zannier for interesting discussions and encouragement.
We are grateful to the referees for providing valuable suggestions that simplified the proof of Lemma \ref{lem1} and improved the exposition of the paper.
Parts of this project have been done while the second author was visiting Graz University of Technology. He is grateful for the 
invitation and the financial support. The first author would like to thank Centro de Giorgi for the hospitality during his visit in Pisa.


\begin{thebibliography}{0}


\bibitem{AmbrosioFuscoPallara}
L. Ambrosio, N. Fusco and D. Pallara,
\emph{Functions of Bounded Variation and Free Discontinuity Problems},
Oxford University Press,
2000.

\bibitem{Barroeroint}
F. Barroero,
\emph{Counting algebraic integers of fixed degree and bounded height},
preprint (2013).

\bibitem{Berarducci2004}
A. Berarducci and M. Otero,
\emph{An additive measure in o-minimal expansions of fields},
Quart. J. Math.
\textbf{55} (2004), no. 4,
411-419.

\bibitem{BombieriPila}
E. Bombieri and J. Pila,
\emph{The number of integral points on arcs and ovals},
Duke Math. J.
\textbf{59} (1989), no. 2,
337-357.

\bibitem{18}
J. W. S. Cassels,
\emph{An Introduction to the Geometry of Numbers},
Springer,
1997.

\bibitem{43}
S-J. Chern and J. D. Vaaler,
\emph{The distribution of values of {M}ahler's measure},
J. reine angew. Math.
\textbf{540} (2001),
1-47.

\bibitem{15}
H. Davenport,
\emph{On a principle of {L}ipschitz},
J. London Math. Soc.
\textbf{26} (1951), 
179-183.


\bibitem{DenefvandenDries}
J. Denef and L. van den Dries,
\emph{{$p$}-adic and real subanalytic sets},
Ann. of Math. (2)
\textbf{128} (1988), no. 1,
79-138.


\bibitem{vandenDries}
L. van den Dries,
\emph{Tame Topology and O-minimal Structures},
Cambridge University Press,
1998.

\bibitem{vandenDriesLimitsets}
L. van den Dries,
\emph{Limit sets in o-minimal structures},
Proceedings of the Real Algebraic and Analytic Geometry Summer School Lisbon 2003: O-Minimal Structures, Cuvillier G\"ottingen (2005),
172-215.

\bibitem{vandenDriesMiller}
L. van den Dries and C. Miller,
\emph{On the real exponential field with restricted analytic functions},
Israel J. Math.
\textbf{85} (1994), no. 1-3,
19-56.

\bibitem{vandenDries96}
L. van den Dries and C. Miller,
\emph{Geometric categories and o-minimal structures},
Duke Math. J.
\textbf{84} (1993), no. 2,
497-540.

\bibitem{Evans}
L. C. Evans and R. F. Gariepy,
\emph{Measure Theory and Fine Properties of Functions},
Studies in Advanced Mathematics, CRC Press,
Boca Raton, FL, 1992.

\bibitem{Federer}
H. Federer,
\emph{Geometric Measure Theory},
Die Grundlehren der mathematischen Wissenschaften, Band 153,
Springer-Verlag New York Inc., New York, 1969.

\bibitem{Gromov}
M. Gromov,
\emph{Entropy, homology and semialgebraic geometry},
S\'eminaire Bourbaki, Vol. 1985/86, expos\'e 663, Ast\'erisque
\textbf{145-146} (1987),
225-240.

\bibitem{HabeggerPila}
P. Habegger and J. Pila,
\emph{Some unlikely intersections beyond {A}ndr\'e-{O}ort},
Compos. Math.
\textbf{148} (2012), no. 1,
1-27.

\bibitem{3}
S. Lang,
\emph{{F}undamentals of {D}iophantine {G}eometry},
Springer,
1983.


\bibitem{1}
D. Masser and J. D. Vaaler,
\emph{Counting algebraic numbers with large height II},
Trans. Amer. Math. Soc.
\textbf{359} (2007), no. 1,
427-445.


\bibitem{MasserZannier08}
D. Masser and U. Zannier,
\emph{Torsion anomalous points and families of elliptic curves},
C. R. Math. Acad. Sci. Paris
\textbf{346} (2008), no. 9-10,
491-494.

\bibitem{MasserZannier10}
D. Masser and U. Zannier,
\emph{Torsion anomalous points and families of elliptic curves},
Amer. J. Math.
\textbf{132} (2010), no. 6,
1677-1691.


\bibitem{Morgan}
F. Morgan,
\emph{Geometric Measure Theory. A Beginner's Guide},
Fourth ed., Elsevier/Academic Press, Amsterdam, 2009.


\bibitem{Pila11}
J. Pila,
\emph{O-minimality and the {A}ndr\'e-{O}ort conjecture for {$\mathbb{C}^n$}},
Ann. of Math. (2)
\textbf{173} (2011), no. 3,
1779-1840.

\bibitem{Pila}
J. Pila and A. J. Wilkie,
\emph{The rational points of a definable set},
Duke Math. J.
\textbf{133} (2006), no. 3,
591-616.

\bibitem{PilaZannier}
J. Pila and U. Zannier,
\emph{Rational points in periodic analytic sets and the {M}anin-{M}umford conjecture},
Atti Accad. Naz. Lincei Cl. Sci. Fis. Mat. Natur. Rend. Lincei
              (9) Mat. Appl.
\textbf{19} (2008), no. 2, 
149-162.

\bibitem{Scanlon1037}
T. Scanlon,
\emph{A proof of the Andr\'e-Oort conjecture using mathematical logic [after Pila, Wilkie and Zannier]},
Ast\'erisque, S\'eminaire Bourbaki, Expos\'e 1037,
(2010-2011).

\bibitem{36}
W. M. Schmidt,
\emph{Asymptotic formulae for point lattices of bounded determinant and subspaces of bounded height},
Duke Math. J.
\textbf{35} (1968),
327-339.

\bibitem{14}
W. M. Schmidt,
\emph{Northcott's {T}heorem on heights {II}. {T}he quadratic case},
Acta Arith.
\textbf{70} (1995), no. 4,
343-375.

\bibitem{Spain}
P. G. Spain,
\emph{Lipschitz: A new version of an old principle},
Bull. London Math. Soc.
\textbf{27} (1995), no. 6,
565-566.

\bibitem{34}
J. L. Thunder,
\emph{The number of solutions of Bounded Height to a System of Linear Equations},
J. Number Theory
\textbf{43} (1993), no. 2,
228-250.

\bibitem{art2}
M. Widmer,
\emph{Counting points of fixed degree and bounded height},
Acta Arith.
\textbf{140} (2009), no. 2,
145-168.

\bibitem{art1}
M. Widmer,
\emph{Counting primitive points of bounded height},
Trans. Amer. Math. Soc.
\textbf{362} (2010), no. 9,
4793-4829.

\bibitem{Widmerintpts}
M. Widmer,
\emph{Integral points of fixed degree and bounded height},
preprint (2012), 28 pages.

\bibitem{WidmerLNCL}
M. Widmer,
\emph{Lipschitz class, narrow class, and counting lattice points},
Proc. Amer. Math. Soc.
\textbf{140} (2012), no. 2,
677-689.

\bibitem{Wilkie96}
A. J. Wilkie,
\emph{Model completeness results for expansions of the ordered field
              of real numbers by restricted {P}faffian functions and the
              exponential function},
J. Amer. Math. Soc.
\textbf{9} (1996), no. 4,
1051-1094.

\bibitem{Yomdin2}
Y. Yomdin,
\emph{$C^k$-resolutions of semialgebraic mappings. Addendum to: Volume growth and entropy},
Israel J. Math.
\textbf{57} (1987), no. 3,
301-317.

\bibitem{Yomdin1}
Y. Yomdin,
\emph{Volume growth and entropy},
Israel J. Math.
\textbf{57} (1987), no. 3,
285-300.




\end{thebibliography}
\end{document}